\newtheorem{theo}{Theorem}[section]
\newtheorem{lem}[theo]{Lemma}
\newtheorem{prop}[theo]{Proposition}
\newtheorem{property}[theo]{Property}
\newtheorem{example}[theo]{Example}
\newtheorem{conjecture}{Conjecture}[section]
\newtheorem{defi}[theo]{Definition}
\newtheorem{remark}[theo]{Remark}
\newtheorem{notationremarque}[theo]{Remark-Notation}
\title{On Andreadakis equality for a subgroup of the McCool group}
\author{Abdoulrahim Ibrahim,\\[6pt]
IMAG, Univ Montpellier, CNRS.\\
Montpellier - 34090, France\\
e-mail: abdoulrahim.ibrahim@umontpellier.fr\\[6pt]
}
 \date{}
\begin{document}
	\maketitle						% Genere le titre
	
%	\tableofcontents					% si on veut une table des matieres
%	\listoffigures						% si on veut la liste des figures
	%\listoftables						% si on veut la liste des tableaux

	\begin{center}
		\textbf{Abstract}
	\end{center}
The McCool group $P\Sigma_n$ ($n\geq1$) has families of subgroups such as the ordinary pure braid group $P_n$, the upper triangular McCool group $P\Sigma^+_n$ and the partial inner automorphism group $I_n$. The generalized Andreadakis conjecture holds for $P\Sigma^+_{n}$ and $P_n$. In this paper, we prove a similar result for $I_n$ and also establish an isomorphism between $I_n$ and the inner automorphism group Int$(P\Sigma^+_{n+1})$ of $P\Sigma^+_{n+1}.$ 

\medskip

%{\bf Math. Subject Classification:} ...

{\bf Key Words:} Andreadakis filtration; IA-automorphism; Lower central series; McCool group.

\section{ Introduction and Notations}
The IA-automorphism group ${IA}_n$ ($n\geq1$) is a subgroup of the automorphism group $Aut(F_n)$ of a free group $F_n=\langle x_1,\dots,x_n\rangle$ of rank $n$ generated by the following automorphisms \cite{magnus35}:
\begin{equation}
\xi_{k,s,t}(x_l)=\left\{ \begin{array}{cl} x_k \cdot\left[x_s, x_t\right] & \textrm{ if } k=l,\\  x_l & \textrm{ if } k\neq l \end{array}\right.
, \text{  $ $   $ $ }
\xi_{i,j}(x_l)=\left\{ \begin{array}{cl} x^{-1}_j \; x_i\;  x_j & \textrm{ if } l= i,\\ x_l & \textrm{ if } l\neq i \end{array}\right.
\label{Eq:1.1}
\end{equation}
where $1\leq i \neq j \leq n, \; 1\leq k,s,t\leq n$ and $k,s,t $ are distinct. In \cite{andreadakis}, Andreadakis defined a descending filtration called Andreadakis filtration on $IA_n$ ($n\geq1$):
\begin{eqnarray}
{IA}_n= \mathcal{A}_n(1) \supset  \mathcal{A}_n(2) \supset  \mathcal{A}_n(3) \supset \cdots \supset  \mathcal{A}_n(k)\supset  \mathcal{A}_n(k+1) \supset \cdots
\end{eqnarray}
where $\mathcal{A}_n(k)$ ($k\geq 1$) is the subgroup of $Aut(F_n)$ consisting of automorphisms acting trivially modulo the $k$-th term of the lower central series $\Gamma_n(k)$ of $F_n$. The subgroup $\mathcal{A}_n(k)$ ($k\geq 1$) also coincides in some cases with the $k$-th term of the lower central series $\Gamma_k(IA n)$ of $IA_n$. For example, by definition, $\mathcal{A}_n(1)=\Gamma_1(IA_n)=IA_n$ and $\mathcal{A}_n(2)=\Gamma_{2}(IA_n)$ for all $n\geq1$ by [\cite{bachmut}, Lemma 5]. Moreover, Andreadakis proved in \cite{andreadakis} that $\mathcal{A}_3(3)=\Gamma_3(IA_3)$ and $\mathcal{A}_2(k)=\Gamma_k(IA_2)$ for all $k\geq1$. Moreover, it was recently established by Satoh in \cite {satoh19} that $\mathcal{A}_n(3)=\Gamma_3(IA_n)$, improving the result of Pettet \cite {pettet} showing that $\Gamma_3(IA_n)$ has at most a finite index in $\mathcal{A}_n(3).$ In \cite{andreadakis}, Andreadakis conjecture that $\mathcal{A}_n(k)=\Gamma_k(IA_n)$ for all $n,k\geq1.$ But this conjecture is not true in general (cf. \cite{bar13, bar16}). Andreadakis' problem gives rise to variants for any $B$ subgroup of $IA_n$: for each integer $k \in \mathbb{N}^{\ast}$, one have inclusions $\Gamma_k(B)\subseteq B\cap \Gamma_k(IA_n)\subseteq B \cap \mathcal{A}_n(k)$ and it is natural to wonder if they are equalities (generalized Andreadakis conjecture for $B$). In this paper we prove the generalized Andreadakis conjecture for the partial inner automorphism group $I_n$, a subgroup of $IA_n$ generated by $\nu_{p,i}=\xi_{1,i}\cdots\xi_{p,i}$ ($1\leq i\leq p \leq n$).
\begin{theo}[]{(Theorem \ref{3.3.7}) }\\
Let $\Gamma_k(I_n)$ denote the $k$-th term of the lower central series of $I_n$. Then the subgroup $I_n$ of ${IA}_n$ verifies the Andreadakis equality:
	\begin{eqnarray}
	\forall k \geq 1, \text{  $ $  } \mathcal{I}_n(k):=I_n \cap \mathcal{A}_n(k)=\Gamma_{k}(I_n).\nonumber
	\end{eqnarray}
\end{theo}
On the other hand, we prove that $I_n$ is isomorphic to the inner automorphism group of the upper triangular McCool group $P\Sigma^+_{n}$, a well-known subgroup of $IA_n$ generated by all automorphisms $\xi_{i,j}$ with $1\leq j<i\leq n$.
\begin{theo}{(Theorem \ref{3.3.2})}\\
The application $P\Sigma^+_{n+1}\subset I_{n+1}\overset{q_n}{\twoheadrightarrow} I_n$ induces an isomorphism $\overline{P\Sigma}^+_{n+1}=P\Sigma^+_{n+1}/Z(P\Sigma^+_{n+1}) \cong I_n$, where $Z(P\Sigma^+_{n+1})$ is the center of $P\Sigma^+_{n+1}$.
\end{theo}
\section{Lower central series of a group}
In this section, we recall some basic definitions about the lower central series of a group $G$ with $e_G$ the identity element. We refer the reader to \cite{mikhailovpassi,robinson,thetheoryofnilpotent} for further details. Let us now define the commutator of two elements $g_1$ and $g_2$ of $G$ as $\left[g_1,g_2\right]:=g_1^{-1} g_2^{-1} g_1 g_2$. More generally, if $n$ is a non-zero integer and $g_1,g_2,\dots,g_n$ are $n$-elements of $G$; a simple commutator of weight $n\geq1$ is given by:
\begin{eqnarray}
\left[ g_1\right]&=&g_1    \nonumber\\
\left[g_1, g_2\right]&=&g^{-1}_1g^{-1}_2 g_1 g_2 \nonumber\\
\left[g_1, g_2,g_3\right]&=&\left[ \left[g_1, g_2\right], g_3\right]=\left[g_1, g_2\right]^{-1}g^{-1}_3 \left[g_1, g_2\right] g_3 \nonumber\\ 
\vdots &\vdots& \vdots \nonumber\\
\left[g_1, g_2, \dots, g_n\right]&=&\left[ \left[g_1, \dots,g_{n-1}\right], g_n\right]\nonumber
\end{eqnarray}
Below we give some identities based on the commutators, which are easy to verify (see \cite{hall50,hall76}):
\begin{property}
	Let $a, b,c$ belong to a group $G.$ Then 
	\begin{enumerate}
		\item $a^b:=a \left[a,b\right] = b^{-1}a b $ and  $\left[ a,b\right]=\left[ b, a\right]^{-1},$ 
		\item $\left[ab, c\right]=\left[a,c\right]^b \left[ b,c\right]$ and $ \left[a, b c \right]= \left[a, c \right] \left[a,b\right]^c,$
		\item $\left[a,b^{-1},c  \right]^b \left[b, c^{-1}, a  \right]^c \left[ c, a^{-1} , b\right]^a=e_G,$ (Witt-Hall identity ).
	\end{enumerate}
where $e_G$ is the identity element of $G$.
	\label{2.1.1}
\end{property}

\begin{defi} Let $G$ a group with $e_G$ its identity element.
\begin{enumerate}
\item One defines the $k$-th lower central series, denoted $\Gamma_k(G)$, of $G$ by the following relations:
\begin{eqnarray}
\Gamma_{1}(G):= G \;\text{ and } \text{   }  \Gamma_{k+1}(G)= \left[\Gamma_{k}(G),G \right], \text{   } k\geq 1.
\end{eqnarray}
where $\left[A,B \right]= \langle \left[a,b\right] \text{ $ :$   }  a \in A \text{ and } b \in B \rangle$ is the commutator group of two subgroups $A,B$ of $G$.
\item The group $G$ is called nilpotent if there exists an integer $r$ such that $\Gamma_r(G)= \{e_G\}.$ For a nilpotent group $G,$ the smallest $r$ such that $\Gamma_r(G)\neq \{e_G\}$ and $\Gamma_{r+1}(G)=\{e_G\}$ is called the nilpotent class of $G$ and the group $G$ is called nilpotent of class $r.$ The group $G/\Gamma_{k+1}(G)$ is a nilpotent group of class $k$ for $k \geq 1$ and it is called the $k$-th nilpotent group of $G$ and we denote it by $N_k(G):=G/\Gamma_{k+1}(G).$
\end{enumerate}
\end{defi}
\begin{notationremarque}{ $ $}\\
Note that $\Gamma_{k+1}(G)$ is a subgroup of $\Gamma_{k}(G)$ because $\Gamma_{k}(G)$ is a normal subgroup of $G$ for all $k\geq1,$ and that $G^{ab}=G/\Gamma_2(G)$ is the abelianization of $G$ because $\Gamma_{2}(G)$ is the derived subgroup of $G.$ From [\cite{robinson}, 5.1.11], it is easy to see that $\Gamma_k(G)/ \Gamma_{k+1}(G)$ ($k\geq1$) is abelian and we denote it by $gr^{k}(G)$ and $0$ as its neutral element. In the rest of this paper, we use the following notation. By $N\leq G,$ we mean that $N$ is a subgroup of $G.$ By $N\trianglelefteq G,$ we mean that $N$ is a normal subgroup of $G.$ Given $N\trianglelefteq G,$ we denote the class  of an element $g\in G$ in the quotient group $G/N$ by $\overline{g}.$ 
\end{notationremarque}

\begin{prop}{[\cite{hall76}, Theorem 10.2.3]}\\
Suppose that $G$ is generated by elements $g_1,\dots, g_r.$ Then $gr^{k}(G)$ $(k\geq 1)$ is generated by the simple commutators $\left[h_{1},\dots,h_{k}\right]\mod \Gamma_{k + 1}(G)$ where the $h_j$ are chosen among the elements $\overline{g}_1,\dots,\overline{g}_r \in G^{ab}$ and are not necessarily distinct. If $G$ is of finite type, then $gr^{k}(G)$ is an abelian group of finite type whose rank is denoted by $\phi_k(G)$ for each $k \geq 1.$
\label{2.1.11}
\end{prop} 
The next theorem about quotient groups of the lower central row of a free group of finite rank is fundamental.

\begin{theo}{[\cite{hall76}, Theorem 11.2.4]}\\
Let $F_n=\langle x_1,\dots,x_n \rangle$ be a free group of rank $n$ and let $\Gamma_{n}(k)$ be the $k$-th subgroup of its lower central series. Then $\mathcal{L}_n(k):=\Gamma_{n}(k)/\Gamma_{n}(k+1)$ is a free abelian group of finite generation with a basis consisting of basic commutators of weights $1,2,\dots,k$ called the Hall-basis and each element $f$ of $F_n$ can be written as	
\begin{eqnarray}
f= c^{\epsilon_1}_1\cdot c^{\epsilon_2}_2 \cdots c^{\epsilon_t}_t \mod \Gamma_{n}(k+1), \text{  $ $ } \epsilon_i= \pm 1, \forall i=1, \dots t .
\end{eqnarray}
where the $c_i$ are the basic commutators of weight at most $n$ arranged in order and the $\epsilon_i$ are integers. 
\label{2.1.15}
\end{theo}
\begin{example}{$ $}\\
\begin{enumerate}
	\item $\mathcal{L}_n(1)=F^{ab}_n$  with basis the classes $\overline{x_1},\dots \overline{x_n} \in F^{ab}_n$ of $x_1,\dots, x_n,$
	\item $\mathcal{L}_n(2)=\Lambda^2 F^{ab}_n$ with basis the classes of $\left[x_i, x_j\right]$ and $i>j.$
\end{enumerate}
\end{example}
A formula due to Witt \cite{witt} provides the rank of $\mathcal{L}_n(k)$ for all $k\geq 1$ and the formula is given by
\begin{eqnarray}
r_n(k):= \frac{1}{k}\sum \limits_{d|k} \mu(d) n^{\frac{k}{d} }
\label{rangdeL}
\end{eqnarray}
where $\mu$ is the Möbius function and $d$ runs through all positive divisors of $k.$ An excellent reference for all this is Hall’s books \cite{hall50,hall76}.\\

Let us  now consider $G$ as semidirect product $N\rtimes H$ of two subgroups $N$ and $H$ of this group. We say that $G=N\rtimes H$ is an almost-direct product if an action of $H$ on $N^{ab}$ is trivial. The following theorem describes the lower central series as well as the quotient groups of their lower central series of such a group.
\begin{theo} { [\cite{falkrandell}, Theorem 3.1] }\\
Let $G=N\rtimes H$ be an almost direct product. Then there exists a split exact sequence of abelian groups 
\begin{eqnarray}
\{0\} \rightarrow gr^k(N) \rightarrow gr^k(G) \rightarrow gr^k(H) \rightarrow \{0\} \text{  $ $   for each } k \geq 1.
\end{eqnarray}
In particular, we have for all $k\geq 1$ 
\begin{eqnarray}
gr^k(G)\cong gr^k(N) \oplus gr^k(H) \text{  $ $  and $ $ } \phi_k(G)=\phi_k(N)+\phi_k(H).
\end{eqnarray}
\label{2.1.16}
\end{theo}
\begin{remark}
A similar statement of Theorem \ref{2.1.16} is given in \cite{ihara,kohno}.
\end{remark}
Let us now see the result of applying Theorem \ref{2.1.16} for several free groups of finite rank. Given $G_1,G_2,\dots, G_k$ free groups of finite rank with $G_p=\langle x_{p,1},\dots, x_{p,{n_p}} \rangle$ ($1\leq p\leq k$), we consider an almost direct product of free groups, i.e. an iterated semidirect product
\begin{eqnarray}
G=G_k \rtimes_{\alpha_k} G_{k-1} \rtimes_{\alpha_{k-1}} \cdots \rtimes_{\alpha_3} G_2 \rtimes_{\alpha_2} G_1= \rtimes^k_{p=1} G_p 
\label{eqsemid7}
\end{eqnarray}
of free groups $G_p$ in which the action of $\rtimes^q_{p=1} G_p$ on $G^{ab}_r$ is trivial for $1 \leq q< r \leq k.$ Thus, the application of Theorem \ref{2.1.16} allows to write:
\begin{eqnarray}
gr^d(G)=\displaystyle\bigoplus^{k}_{p=1} gr^d(G_p) =\displaystyle\bigoplus^{k}_{p=1}\mathcal{L}_{n_p}(d), \text{  $ $   } \forall  d\geq 1.
\end{eqnarray}
The structure of $G=\rtimes^k_{p=1} G_p$ \eqref{eqsemid7} is given by a group homomorphism $\alpha_l: \rtimes^{l-1}_{p=1} G_p \rightarrow IA(G_l)$ where $IA(G_l)$ denotes the group of automorphisms of $G_l$ which act trivially on its abelianization. The group $G=\rtimes^k_{p=1} G_p$ has the presentation as follows \cite{Johnsonsomepopular} :
 \begin{eqnarray}
\langle x_{p,i} \; \; \; (1 \leq i \leq n_p, \; 1 \leq p \leq k) \: | \; x^{-1}_{p,i} x_{q,j} x_{p,i}=\alpha_q (x_{p,i})(x_{q,j}) \; \; 1\leq p < q \leq k, 1 \leq j \leq n_q \rangle \label{Eq:2.3}
\end{eqnarray} and its abelianization $G^{ab}$ is a free abelian group of rank $\displaystyle\sum^{k}_{p=1}n_p.$ We conclude this section to rewrite the above presentation \eqref{Eq:2.3} for an almost-direct product of free groups. Recall first that the IA-automorphism group of $F_n$, denoted ${IA}_n$, is generated by the automorphisms given by \cite{magnus35} :
\begin{equation}
\xi_{k,s,t}(x_l)=\left\{ \begin{array}{cl} x_k \left[x_s, x_t\right] & \textrm{ if } k=l,\\  x_l & \textrm{ if } k\neq l \end{array}\right.
, \text{  $ $   $ $ }
\xi_{i,j}(x_l)=\left\{ \begin{array}{cl} x^{-1}_j \; x_i\;  x_j & \textrm{ if } l= i,\\ x_l & \textrm{ if } l\neq i \end{array}\right.
\label{Eq:2.4}
\end{equation}
where $1\leq i \neq j \leq n, \; 1\leq k,s,t\leq n$ and $k,s,t$ are distinct. The following proposition describes a group presentation of $G=\rtimes^k_{p=1} G_p$.
\begin{prop}{[\cite{cohen}, Proposition 2.1 ]}\\
Let $G=\rtimes^k_{p=1} G_p$ be an almost-direct product of the free groups $G_p=\langle x_{p,1},\dots, x_{p,{n_p}} \rangle.$ Then $G$ admits a presentation with generators $x_{p,i}$ for $1\leq p \leq k , \; 1 \leq i \leq n_p$ and the following relations 
\begin{eqnarray}
x_{q,j}\cdot x_{p,i}= x_{p,i} \cdot x_{q,j} \cdot w^{p,q}_{i,j} , \; \; 1 \leq p<q\leq k, \; \;   1 \leq i \leq n_p, \;  1 \leq j \leq n_q
\label{Eq:2.5}
\end{eqnarray}
where $ w^{p,q}_{i,j} \in \Gamma_2(G)$ is a word in generators $x_{q,1},\cdots, x_{q,n_q}.$
\label{2.1.18}
\end{prop}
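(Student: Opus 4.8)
The plan is to derive the presentation \eqref{Eq:2.5} from the presentation \eqref{Eq:2.3} of $G=\rtimes^k_{p=1}G_p$ by a single round of Tietze transformations, rewriting each defining relation into commutator form without adding or removing any generators. Recall that, according to \eqref{Eq:2.3}, the group $G$ is generated by the $x_{p,i}$ ($1\le p\le k$, $1\le i\le n_p$) subject only to the relations
\[
x^{-1}_{p,i}\,x_{q,j}\,x_{p,i}=\alpha_q(x_{p,i})(x_{q,j}),\qquad 1\le p<q\le k,\ \ 1\le i\le n_p,\ \ 1\le j\le n_q,
\]
where $\alpha_q\colon\rtimes^{q-1}_{p=1}G_p\to IA(G_q)$ is the structural homomorphism defining the iterated semi-direct product.

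First I would examine the right-hand sides. Fix $p<q$ and indices $i,j$, and set
\[
w^{p,q}_{i,j}:=x^{-1}_{q,j}\,\alpha_q(x_{p,i})(x_{q,j}).
\]
Since $\alpha_q(x_{p,i})$ is an automorphism of $G_q$ lying in $IA(G_q)$, i.e.\ acting trivially on $G^{ab}_q$, the image $\alpha_q(x_{p,i})(x_{q,j})$ is a word in the generators $x_{q,1},\dots,x_{q,n_q}$ that is congruent to $x_{q,j}$ modulo $\Gamma_2(G_q)$. Consequently $w^{p,q}_{i,j}$ is a word in $x_{q,1},\dots,x_{q,n_q}$ that lies in $\Gamma_2(G_q)$; and since $G_q$ is a subgroup of $G$ we have $\Gamma_2(G_q)=(G_q,G_q)\le (G,G)$, so $w^{p,q}_{i,j}\in (G,G)$, as claimed. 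Concretely, $w^{p,q}_{i,j}$ is the commutator $(x_{q,j},x_{p,i})$ of $G$, rewritten by means of the relation above so as to involve only the generators of the single block $G_q$.

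Then I would perform the substitution. Left-multiplying the relation $x^{-1}_{p,i}x_{q,j}x_{p,i}=\alpha_q(x_{p,i})(x_{q,j})$ by $x_{p,i}$ and using $\alpha_q(x_{p,i})(x_{q,j})=x_{q,j}\,w^{p,q}_{i,j}$ yields the equivalent relation
\[
x_{q,j}\,x_{p,i}=x_{p,i}\,x_{q,j}\,w^{p,q}_{i,j},
\]
which is exactly \eqref{Eq:2.5}. This rewriting is reversible and involves no change of generating set, so by Tietze's theorem the two presentations define the same group $G$. To see that \eqref{Eq:2.5} is a complete set of relations, note that two generators from the same block $G_r$ obey no further relation (each $G_r$ being free), while for generators from distinct blocks, taking the one with smaller block index first, the relation just displayed tells us how to move $x_{q,j}$ past $x_{p,i}$, and solving it for $x_{p,i}x_{q,j}$ handles the opposite ordering.

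The manipulation is elementary; the one point that needs care is the middle step — checking that $w^{p,q}_{i,j}$ simultaneously lies in the commutator subgroup $(G,G)$ and is supported on the generators of the block $G_q$ alone. This is precisely where the almost-direct hypothesis enters, through the fact that the structural maps $\alpha_q$ take values in $IA(G_q)$ rather than in all of $Aut(G_q)$.
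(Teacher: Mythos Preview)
Your proof is correct and follows the same route as the paper: start from the presentation \eqref{Eq:2.3}, define $w^{p,q}_{i,j}=x_{q,j}^{-1}\alpha_q(x_{p,i})(x_{q,j})$, and rewrite each relation in the form \eqref{Eq:2.5}. The only difference is in the verification that $w^{p,q}_{i,j}\in(G,G)$: you argue directly from the definition of $IA(G_q)$ (trivial action on $G_q^{ab}$ forces $\alpha_q(x_{p,i})(x_{q,j})\equiv x_{q,j}\bmod\Gamma_2(G_q)$), whereas the paper expresses $\alpha_q(x_{p,i})$ as a product of Magnus generators $\xi_{i,j},\xi_{k,s,t}$ and checks the claim by induction on the length of that product --- your argument is shorter and avoids invoking the Magnus generating set.
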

\begin{proof}  
By \eqref{Eq:2.3} the group $G$ has for relations
\begin{eqnarray}
x^{-1}_{p,i} \cdot x_{q,j}\cdot x_{p,i}&=&\alpha_q (x_{p,i})(x_{q,j}) \nonumber\\
x_{q,j} \cdot x_{p,i}&=&x_{p,i} \cdot \alpha_q (x_{p,i})(x_{q,j}) 
\label{Eq:2.6}
\end{eqnarray} 
where $\alpha_q (x_{p,i}) \in IA_{n_q}:=IA(G_q).$ It is clear that $w^{p,q}_{i,j}:=\alpha_q (x_{p,i})(x_{q,j})$ is a word in the generators $x_{q,1},\cdots, x_{q,n_q}.$ Since $\alpha_q (x_{p,i}) \in IA_{n_q},$ we have $\alpha_q (x_{p,i})=\xi^{\epsilon_1}_1\cdots \xi^{\epsilon_m}_m$ where $\epsilon_s\in\{\pm1\}$ and each $\xi^{\epsilon_r}_r$ is either $\xi_{k,s,t}$ or $\xi_{i,j}$ for $1\leq r \leq m. $
Note that \begin{eqnarray}
\xi_{j,i}(x_{q,j})= x_{q,j} \left[x_{q,i} , x_{q,j}\right] \text{  $ $   $ $ } \xi_{j,s,t}(x_{q,j})=x_{q,j} \left[x_{q,s} , x_{q,t}\right].
\end{eqnarray}
Thus, an induction on $m$ shows that $w^{p,q}_{i,j}:=x^{-1}_{q,j} \;\alpha_q(x_{p,i})(x_{q, j})$ is an element of $\Gamma_2(G).$ From \eqref{Eq:2.6} we thus arrive at $x_{q,j} x_{p,i}= x_{p,i} \; x_{q,j}w^{p,q}_{i,j}.$
\end{proof}

\section{ The McCool group}
The McCool group or pure welded braid group on $n$-strands $P\Sigma_n$ ($n\geq1$) (also known as the group of basis conjugating automorphisms of free group) is a subgroup of ${IA}_n$ generated by the following automorphisms \cite{humphries}:
\begin{equation}
\xi_{i,j}(x_l)=\left\{ \begin{array}{cl} x^{-1}_j \; x_i\;  x_j & \textrm{if } l= i,\\ x_l & \textrm{if } l\neq i .\end{array}\right.
\end{equation}
for all $1\leq i\neq j\leq n$. McCool proved that the following relations known as McCool relations determine a presentation of $P\Sigma_n$ \cite{mccool}:
\begin{eqnarray}
\left[\xi_{k,j}, \; \xi_{s,t}\right]  &=&\displaystyle{1\!\!1}  \text{ if } \; \{i,j\} \cap \{s,t\}= \emptyset \nonumber,\\
\left[\xi_{i,j}, \; \xi_{k,j}\right]&=&\displaystyle{1\!\!1} \text{ for } \; i,j, k \text{ distinct }\nonumber,\\
\left[\xi_{i,j}. \; \xi_{kj}, \; \xi_{i,k}\right]&=&\displaystyle{1\!\!1}  \text{ for } \; i,j, k  \text{ distinct }.
\label{Eq:2.7}	
\end{eqnarray}
%From the McCool relations \eqref{Eq:2.7}, the following formulas hold in $P\Sigma_n$:
%\begin{property} (Lemma 1 in \cite{barda03})
%\begin{enumerate} 
%\item $\xi^{-v}_{i,j}. \xi_{k,l}. \xi^{v}_{i,j}= \xi_{k,l} \nonumber$
%\item $\xi^{-v}_{i,j}. \xi_{k,j}. \xi^{v}_{i,j}= \xi_{k,j} \nonumber$
%\item $\xi^{-v}_{i,j}. \xi_{k,i}. \xi^{v}_{i,j}= \xi^{v}_{k,j}. \xi_{k,i}. \xi^{-v}_{k,j} \nonumber$
%\item $\xi^{-v}_{i,j}. \xi_{i,k}. \xi^{v}_{i,j}= \xi^{v}_{k,j}. \xi_{i,k}. \xi^{-v}_{k,j} \nonumber$
%\item $\xi^{-v}_{i,j}. \xi_{j,k}. \xi^{v}_{i,j}= \left[\xi^{-v}_{k,j}, \xi_{i,k} \right]. \xi_{j,k}$
%\end{enumerate}
%where $v=\pm1.$ 
%\label{2.1.19}
%\end{property}
%As shown for example in \cite{cpvw}, $P\Sigma_n$ is a decomposition of an iterated semidirect product:
%\begin{eqnarray}
%P\Sigma_{n}=K_{n-1} \rtimes( K_{n-2} \rtimes \cdots (\rtimes (K_2 \rtimes K_1)) \cdots ) \text{  $ $   $ $ } (n \geq 2)
%\end{eqnarray}
%where $K_i=\langle \xi_{i+1,1},\xi_{i+1,2}, \cdots, \xi_{i+1,i}, \xi_{1,i+1} ,\xi_{2,i+1}, \cdots, \xi_{i,i+1} \rangle.$ In particular, $P\Sigma_{1}=\{\displaystyle{1\!\!1}\}$ is the trivial group. For each $i\geq 1,$ let $\mathbb{F}_{i}$ be the subgroup of $P\Sigma_n$ generated by $\xi_{i+1,1},\xi_{i+1,2}, \cdots, \xi_{i+1,i}$ and let $\mathbb{D}_i$ be the subgroup of $P\Sigma_n$ generated by $\xi_{1,i+1} ,\xi_{2,i+1}, \cdots, \xi_{i,i+1}.$ As indicated in [\cite{barda03}, Theorem 1.1], the subgroup $\mathbb{F}_{i}$ is a free group of rank $i$ and the subgroup $\mathbb{D}_i$ is a free abelian group of rank $i.$ 
The group $P\Sigma_{n}$ ($n\geq1$) contains a well-known subgroup. This is the upper triangular McCool group $P\Sigma^+_{n},$ the subgroup of $P\Sigma_{n}$ generated by the automorphisms $\xi_{i,j}$ with $1\leq j<i \leq n.$ The ranks of the quotient groups of the lower central series of $P\Sigma^+_n$ were computed in \cite{cpvw} but the ranks $\phi_k(P\Sigma_n)$ ($k\geq1$) of the quotient groups of the lower central series of the full McCool group $P\Sigma_n$ were not determined in general. However, we would like to refer the reader to Appendix \ref{sec:Groupe quotient de la série central descendante pour le groupe 1}, where the ranks $\phi_k(P\Sigma_n)$ have been determined in some cases. Moreover, it is known that $P\Sigma^+_n$ ($n\geq1$) can be realized as an iterated almost-direct product of free groups (see \cite{ccprassidis,cpvw}). For this reason, then, one can use Proposition \ref{2.1.18} to determine a group presentation of $P\Sigma^+_n$ ($n\geq1$). The reader is referred to Appendix \ref{sec:Factor groups of the lower central series for upper triangular McCool group} for more details. In \cite{cohen pruidze}, Cohen and Pruidze determined that the center of the upper triangular McCool group is infinite cyclic. For a group $G,$ let $Z(G)$ denote the center of $G$ and we denote the group $G/Z(G)$ by $\overline{G}.$ This group is isomorphic to the inner automorphism group $Inn(G)$ of $G.$ In a later section of this paper, we show in more detail that $\overline{P\Sigma}^+_n$ ($n\geq 2$) is isomorphic to the partial inner automorphism group $I_n$, another subgroup of $P\Sigma_n$ generated by $\nu_{p,i}=\xi_{1,i}\cdot\xi_{2,i}\cdots,\xi_{p,i}$ ($1\leq i \leq p \leq n$).

\section{ Andreadakis filtration and Johnson homomorphisms}
\subsection{Andreadakis filtration }
In this section, we define the filtration of Andreadakis of $IA_n.$ To define this notion, we first recall the definition of an N-series on a group $G$ introduced by Lazard in the 1950s. A good reference for this topic is Lazard's original paper \cite{lazard}. An N-series is by definition a decreasing filtration $$G=H_1 \supseteq H_2 \supseteq \dots \supseteq H_k \supseteq H_{k+1} \supseteq \dots$$ of $G$ by subgroups $H_1, H_2, \dots $ of $G$ verifying $ \left[H_p, H_q\right]\subseteq H_{p+q}$ for all $p,q \geq 1.$ For $q=1,$ the relations $\left[H_p, G \right] \subseteq H_{p+1}$ imply that $H_p$ is a normal subgroup in $G$ and also that $H_p/H_{p+1}$ is a subgroup of $Z(G/H_{p+1}),$ the center of $G/H_{p+1}.$ It is clear that the group $H_p/H_{p+1}$ is an abelian group since $\left[H_p, H_p\right]\subseteq H_{2p} \subseteq H_{p+1}.$ Let $Gr^p(H)=H_p/H_{p+1}$ for all $p \geq 1.$ We will denote it additively, namely for all $x, y \in H_p $ one has $\overline{xy}=\overline{x}+\overline{y}$ in $Gr^p(H).$ Let us form the direct sum
\begin{eqnarray}
Gr(G)=\bigoplus_{p\geq 1}Gr^p(H).
\end{eqnarray}
One can easily prove by the identities 2 and 3 Property \ref{2.1.1} that $Gr(G)$ has the structure of a graded Lie algebra, with a Lie bracket $\left[x, y\right]:=\overline{\left[x,y\right]}$ induced by the commutator of $G.$ The most famous example of an N-series on $G$ is the lower central series $\{\Gamma_k(G)\}_{k\geq 1}$ and its resulting Lie algebra $gr^{\ast}(G)=\displaystyle\bigoplus_{k\geq 1}gr^k(G)$ is the standard graded Lie algebra over $\mathbb{Z}.$ The lower central series is the smallest of an N-series, i.e., given any N-series $\{H_k\}_{k}$ on $G,$ we have the relations $\Gamma_k(G) \subseteq H_k$ for all $k\geq 1$, established by induction on $k.$ There is a canonical application $$gr^k(G) \longrightarrow Gr^k(H)  \text{  $ $ } (k\geq 1).$$ 
\begin{defi}
An N-series $\{H_k\}_k$ on a group $G$ is said separating if the intersection of the subgroups $H_k$ reduces to the identity element i.e. ${\displaystyle\bigcap_{k\geq 1} } H_k =\{e_G\}.$
The group $G$ is said to be residually nilpotent if the lower central series $\Gamma_k(G)$ of $G$ is separating, i.e. ${\displaystyle\bigcap_{k\geq 1} }\Gamma_k(G) =\{e_G\}.$
\end{defi}
We are now in a position to introduce the Andreadakis filtration. The action of $Aut(G)$ on the $k$-th nilpotent quotient $N_k(G)=G/\Gamma_{k+1}(G)$ ($k\geq1$) induces a group homomorphism $\lambda_k: Aut(G) \longrightarrow Aut(N_{k}(G))$ and its kernel $\mathcal{A}_G(k)$, consisting of automorphisms trivially acting on the k-th nilpotent quotient of $G$, is the set
\begin{eqnarray}
\mathcal{A}_G(k)&=&\{\phi \in Aut(G) \; | \; g^{-1}\; \phi(g) \in \Gamma_{k+1}(G)  ,\; \forall g \in G\}
\label{Eq:2.9}
\end{eqnarray}
The first term $IA(G):=\mathcal{A}_G(1)$ is the IA-automorphism group of $G$ also known as the Torelli group of $G.$ By construction, the groups $\mathcal{A}_G(k)$ are normal subgroups of $Aut(G).$ Andreadakis \cite{andreadakis} showed that:
\begin{itemize}
	\item [(A0)]  For all $k\geq1,$ $\mathcal{A}_G(k+1) \subset \mathcal{A}_G(k).$	
	\item [(A1)]For all $k,d\geq 1,$ $\left[\mathcal{A}_G(k), \mathcal{A}_G(d)\right]\subseteq \mathcal{A}_G(k+d).$ 
	\item [(A2)] For all $k,d\geq 1,$ $ \phi \in \mathcal{A}_G(k)$ and $g \in \Gamma_d(G), \; g^{-1} \; \phi(g)  \in \Gamma_{k+d}(G).$ 
	\item [(A3)]  If $\displaystyle\bigcap_{d\geq 1} \Gamma_d(G)=\{e_G\}$ $ $ then $ $ $\displaystyle\bigcap_{d\geq 1} \mathcal{A}_G(d)= \displaystyle{1\!\!1}_G.$  
\end{itemize}
From (A0)-(A1), there is thus an N-series
\begin{eqnarray}
IA(G):=\mathcal{A}_G(1) \supset \mathcal{A}_G(2) \supset \dots \supset \mathcal{A}_G(k) \supset \dots
\label{filration15}
\end{eqnarray}
on $IA(G).$ Consequently, one must have :
$$\mathcal{A}_G(k) \supseteq \Gamma_k(IA(G)) \text{  for all } k\geq 1.$$ 
The descending filtration $\{\mathcal{A}_G(k)\}_{k \geq 1}$ defined on $IA(G)$ is sometimes called Johnson filtration, but is due to Andreadakis \cite{andreadakis}.
\begin{defi}
The N-series $\{\mathcal{A}_G(k)\}_{k\geq 1}$ on $IA(G)$ is called here the Andreadakis filtration of $IA(G).$ We write $Gr^k(\mathcal{A}_G):= \mathcal{A}_G(k)/\mathcal{A}_G(k+1)$ for each $k\geq1.$  
\end{defi}
The groups $Gr^k(\mathcal{A}_G)$ ($k\geq 1$) admit an action of $Aut(G)/IA(G)$ defined as follows: since $\mathcal{A}_G(k)\trianglelefteq Aut(G),$ the group $Aut(G)$ naturally acts on $\mathcal{A}_G(k)$ by conjugation. Let $\overline{\phi} \in Gr^k(\mathcal{A}_G)$ be a representative of an automorphism $\phi \in \mathcal{A}_G(k)$ and let $\overline{\psi} \in Aut(G)/IA(G)$ be a representative of an automorphism $\psi \in Aut(G).$ The action of $\overline{\psi}$ on $\overline{\phi}$ is given by
\begin{eqnarray}
\overline{\psi} \cdot \overline{\phi}:= \overline{\psi\phi \psi^{-1}} .
\label{Eq:2.11}
\end{eqnarray}
The groups $gr^k(G)$ $(k\geq 1)$ also admit an action of $Aut(G)/IA(G)$ defined as follows: since $\Gamma_k(G)$ is a characteristic subgroup of $G$ (see [\cite{thetheoryofnilpotent}, Lemma 2.3] ), $Aut(G)$ naturally acts on $\Gamma_k(G)$ and then on $gr^k(G).$ Let $\phi \in Aut(G)$ and let $\overline{g}\in gr^k(G)$ be a representative of an element $g\in \Gamma_k(G).$ Let the action of $\phi$ on $\overline{g}$ be given by
\begin{eqnarray}
\phi \cdot \overline{g}:=\overline{\phi(g)}.
\label{Eq:2.10}
\end{eqnarray}
The restriction of this action \eqref{Eq:2.10} to $IA(G)$ is trivial by (A2). So one has an action of $Aut(G)/IA(G)$ on $gr^k(G).$ To illustrate all these concepts, we consider the case where $G$ is the free group $F_n$. We write $V,\Gamma_n(k), \mathcal{L}_n(k),$ $ IA_n,$ $\mathcal{A}_n(k)$ and $Gr^k(\mathcal{A}_n)$ for $F^{ab}_n,\Gamma_{k}(F_n),\text{   } gr^k(F_n),$ $IA(F_n),\text{   } \mathcal{A}_{F_n}(k)$  and $Gr^k(\mathcal{A}_{F_n})$ respectively, unless otherwise stated. Magnus, Witt and Hall have elucidated the structure of $gr^{\ast}(F_n)$ (see the books \cite{magnuskarrassolitar,hall50,hall76}) and we recall two of their results:
\begin{itemize}
\item[(1)] The group $F_n$ is residual nilpotent, i.e. $\displaystyle\bigcap_{k\geq 1}  \Gamma_n(k) =\{e_{F_n}\},$	\item[(2)] The Lie algebra $gr^{\ast}(F_n):=\displaystyle\bigoplus_{k\geq 1}\mathcal{L}_n(k)$ is isomorphic to a free Lie algebra of rank $n.$
\end{itemize}
Let us now turn to the Andreadakis filtration on ${IA}_n$. This filtration given by the subgroups $\mathcal{A}_n(k)$ has the property of $\Gamma_k(IA_n) \subseteq \mathcal{A}_n(k)$ for all $k \geq 1.$  These inclusions become equalities for some cases, viz.
\begin{description}
\item[(a)] $\Gamma_1(IA_n) = \mathcal{A}_n(1)$ by definition,
\item[(b)] $\Gamma_2(IA_n) = \mathcal{A}_n(2)$ for all $n\geq 2$ by [\cite{bachmut}, Lemma 5],
\item[(c)] $\Gamma_k(IA_2) = \mathcal{A}_2(k)$ for all $k \geq 2$ and $\Gamma_3(IA_3) = \mathcal{A}_3(3)$ by [\cite{andreadakis}, Theorems 6.1 and 6.2].
\end{description}
Satoh \cite{satoh19} recently proved that $\Gamma_3(IA_n)=\mathcal{A}_n(3)$ for all $n\geq 3.$ He thus improved the result of Pettet \cite{pettet} where she showed that $\Gamma_3(IA_n)$ has at most a finite index in $\mathcal{A}_n(3).$ It was conjectured by Andreadakis \cite{andreadakis} that $\Gamma_k(IA_n) = \mathcal{A}_n(k)$ for all $k \geq 3$ and $n \geq 3.$ But Bartholdi \cite{bar13,bar16} showed that this conjecture, known as the {\em Andreadakis conjecture} is not true in general. In contrast, Andreadakis' problem gives rise to variants for any $B$ subgroup of $IA_n$: for each integer $k \in \mathbb{N}^{\ast}$, one have inclusions $\Gamma_k(B)\subseteq B\cap \Gamma_k(IA_n)\subseteq B \cap \mathcal{A}_n(k)$ and it is natural to wonder if they are equalities (generalized Andreadakis conjecture for $B$). Significant progress on the generalized Andreadakis conjecture were recently made by Takao Satoh, Ştefan Papadima and Jacques Darné. It is known from \cite{satoh17} that the restricted Andreadakis filtration to $P\Sigma^+_n$ coincides with the lower central series of $P\Sigma^+_n,$ i.e., for all $n \geq 1 $ we have $$\mathcal{M}^+_n(k):= \mathcal{A}_n(k) \cap P\Sigma^+_n=\Gamma_k(P\Sigma^+_n) \text{  $ $ } (\forall k \geq 1).$$ In \cite{darne,papadima}, a similar result holds for the case of the pure braid group $P_n$, which is a subgroup formed from automorphisms of $ P\Sigma_n$ that leaves the word $x_1\cdots x_n \in F_n$ invariant (see for example \cite{barda03}). In other words we have for all $n \geq 1,$ $$\mathcal{P}_n(k):= \mathcal{A}_n(k) \cap P_n = \Gamma_k(P_n) \text{  $ $ } (\forall k \geq 1) .$$
The pure braid groups $P_n$ and the upper triangular McCool group $P\Sigma^+_n$ are both subgroups of $P\Sigma_n$ and both have an almost-direct product structure of free groups. It is natural to ask the question: what other subgroup $B$ of $P\Sigma_n$ that decomposes into an iterated almost-direct product of free groups satisfies this equality, which we call the \emph{"Andreadakis equality "},
\begin{eqnarray}
\mathcal{B}_n(k):= \mathcal{A}_n(k) \cap B= \Gamma_k(B) \text{ $ $ } (\forall k \geq 1)  \text{  $ $ } ?
\label{Eq:2.12}
\end{eqnarray}
The partial inner automorphism group $I_n$ which is the subgroup of $P\Sigma_n$ generated by $\nu_{p,i}= \xi_{1,i} \xi_{2,i} \cdots \xi_{p,i}$ ($1\leq i\leq p \leq n$) is an almost-direct product iterated free groups. The group $I_n$ is a good candidate to verify the Andreadakis equality \eqref{Eq:2.12}. Later, we will show that $I_n$ satisfies Andreadakis equality using a tool called Johnson homomorphism. The Johnson homomorphism introduced in the 1980s by Johnson \cite{johnson83,johnson85}, is a good tool to study the groups $Gr^k(\mathcal{B})=\mathcal{B}_n(k)/\mathcal{B}_n(k+1)$ associated to the Andreadakis filtration $\{\mathcal{B}_n(k)\}_{k\geq 1}$ on $B.$
\subsection{The Johnson homomorphism }
Recall here the $k$-th Johnson homomorphism of $Aut(F_n).$ We refer the reader to \cite{satoh16,johnson83,johnson85} for more details. Let $Hom_{\mathbb{Z}}(A,B)$ be the set of all homomorphisms from abelian groups $A$ to $B.$ Consider the homomorphism 
\begin{eqnarray}
\tau'_k :\mathcal{A}_n(k) \longrightarrow Hom_{\mathbb{Z}}(V, \mathcal{L}_n(k+1))  
\end{eqnarray}
defined by the formula $\sigma \longmapsto \tau_k(\sigma) :x \mod \Gamma_{n}(2) \mapsto x^{-1} \sigma(x) \mod \Gamma_n(k+2),  \text{ for } x \in V$
One can easily see that the kernel $ker(\tau'_k)$ of $\tau'_k$ is, by definition, $\mathcal{A}_n(k+1),$ (see \eqref{Eq:2.9}). Now let  $V^{\ast}:=$Hom$_{\mathbb{Z}}(V, \mathbb{Z})$ be the  dual group of $V$ with $\{x^{\ast}_1, \dots,x^{\ast}_n \}$ its dual basis. Thus Hom$_{\mathbb{Z}}(V, \mathcal{L}_n(k+1)) \cong V^{\ast} \displaystyle\bigotimes_{\mathbb{Z}} \mathcal{L}_n(k+1)$ as abelian groups (see \cite{bourbaki}.) Hence, one can obtain an injective homomorphism that we denote it by $\tau_k,$
\begin{eqnarray}
\begin{array}{ccccc}
\tau_k & : & Gr^k(\mathcal{A}_n) & \hookrightarrow &  V^{\ast} \displaystyle\bigotimes_{\mathbb{Z}} \mathcal{L}_n(k+1) \\
& & \overline{\sigma} & \mapsto & x^{\ast}_i \otimes \overline{x^{-1}_i \sigma(x_i)}. \\
\end{array}
\label{Johnson homomo}
\end{eqnarray}
In \cite{kawa}, it is shown that $\tau_k$ is not surjective for $k\geq 2$ but that
\begin{eqnarray}
\tau'_1:IA_n \longrightarrow V^{\ast} \bigotimes_{\mathbb{Z}} \Lambda^2 V 
\label{Eq:2.15}
\end{eqnarray}
induces an isomorphism $\tau_1:Gr^1(\mathcal{A}_n)=IA^{ab}_n \xrightarrow{\cong} V^{\ast} \bigotimes_{\mathbb{Z}} \Lambda^2 V$ as abelian groups. By restricting $\tau'_1$ \eqref{Eq:2.15} to $P\Sigma_n,$ we arrive at the following: 
\begin{lem}
	The abelianization $P\Sigma^{ab}_n$ of $P\Sigma_n$ is a free abelian group and its basis is given by the class $\overline{\xi_{ij}}$ for $1\leq i\neq j \leq n.$
	\label{2.26}
\end{lem}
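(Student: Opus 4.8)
The plan is to identify the abelianization $P\Sigma_n^{ab}$ by combining the structural decomposition of $P\Sigma_n$ as an iterated semi-direct product with the injectivity of the first Johnson homomorphism $\tau_1$ restricted to $P\Sigma_n$. First I would recall that $\tau_1'$ in \eqref{Eq:2.15} induces an isomorphism $IA_n^{ab} \xrightarrow{\cong} V^{\ast} \otimes_{\mathbb{Z}} \Lambda^2 V$, so the composite $P\Sigma_n \hookrightarrow IA_n \xrightarrow{\tau_1'} V^{\ast} \otimes_{\mathbb{Z}} \Lambda^2 V$ sends $\xi_{i,j}$ to $x_i^{\ast} \otimes (x_i \wedge x_j)$ (using $\xi_{i,j}(x_i) = x_j^{-1} x_i x_j$, so $x_i^{-1}\xi_{i,j}(x_i) = (x_i,x_j) \equiv x_i\wedge x_j$ modulo $\Gamma_n(3)$, and $\xi_{i,j}$ fixes all other generators). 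These target elements, as $(i,j)$ ranges over pairs with $i \neq j$, are part of the standard $\mathbb{Z}$-basis of $V^{\ast} \otimes \Lambda^2 V$, hence $\mathbb{Z}$-linearly independent. Since $\tau_1'$ factors through $IA_n^{ab}$, the images $\overline{\xi_{i,j}}$ in $IA_n^{ab}$ are linearly independent, and a fortiori the classes $\overline{\xi_{i,j}}$ in $P\Sigma_n^{ab}$ are linearly independent.

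Next I would establish that $P\Sigma_n^{ab}$ is free abelian and that $\{\overline{\xi_{i,j}} : 1 \leq i \neq j \leq n\}$ spans it, so that the linear independence above upgrades to a basis statement. Spanning is immediate: the $\xi_{i,j}$ generate $P\Sigma_n$, so their classes generate $P\Sigma_n^{ab}$. For freeness, I would use the iterated almost-direct product decomposition $P\Sigma_n = K_{n-1} \rtimes (K_{n-2} \rtimes \cdots (K_2 \rtimes K_1))$ recorded in the excerpt, together with the Falk--Randell theorem (Theorem \ref{2.1.16}) and its corollary, which give $gr^1(P\Sigma_n) = \bigoplus_i gr^1(K_i)$. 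Each $K_i = \daleth_i \times \beth_i$ is a product of a free group of rank $i$ and a free abelian group of rank $i$, so $gr^1(K_i) = K_i^{ab}$ is free abelian of rank $2i$; summing gives $P\Sigma_n^{ab}$ free abelian of rank $\sum_{i=1}^{n-1} 2i = n(n-1)$, which is exactly the number of ordered pairs $(i,j)$ with $i \neq j$. Since we have $n(n-1)$ linearly independent generators in a free abelian group of rank $n(n-1)$, they form a basis.

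Alternatively, and perhaps more cleanly, one can avoid the rank count entirely: having shown the $\overline{\xi_{i,j}}$ are linearly independent in $IA_n^{ab}$ (hence their images generate a free abelian subgroup of $V^\ast \otimes \Lambda^2 V$ of rank $n(n-1)$), and noting they generate $P\Sigma_n^{ab}$, it suffices to observe that the surjection $\mathbb{Z}^{n(n-1)} \twoheadrightarrow P\Sigma_n^{ab}$ sending basis vectors to the $\overline{\xi_{i,j}}$ becomes injective after composing with $\tau_1'$, hence is injective. This shows $P\Sigma_n^{ab} \cong \mathbb{Z}^{n(n-1)}$ with the claimed basis in one stroke.

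The main obstacle, such as it is, is purely bookkeeping: one must be careful that the relations \eqref{Eq:2.7} defining $P\Sigma_n$ introduce no additional relations among the $\overline{\xi_{i,j}}$ in the abelianization beyond those already accounted for. This is precisely what the injectivity argument via $\tau_1'$ handles for free — any hidden relation would be detected as a linear dependence among the $x_i^\ast \otimes (x_i \wedge x_j)$, which does not exist. So I expect the proof to be short, with the only real content being the explicit computation of $\tau_1'(\xi_{i,j})$ and the invocation of the Kawazumi isomorphism \eqref{Eq:2.15}; the Falk--Randell route is available as a self-contained backup for freeness if one prefers not to lean on the rank of $IA_n^{ab}$.
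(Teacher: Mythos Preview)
Your proposal is correct, and your primary argument via $\tau_1'$---showing that the generators $\overline{\xi_{i,j}}$ of $P\Sigma_n^{ab}$ map to the linearly independent elements $x_i^{\ast}\otimes(x_i\wedge x_j)$ of $V^{\ast}\otimes_{\mathbb{Z}}\Lambda^2 V$, hence form a basis---is exactly the paper's proof, which you have in fact fleshed out in more detail than the paper itself provides. The Falk--Randell backup is superfluous here (and the paper neither asserts $K_i=\daleth_i\times\beth_i$ nor that the $K_i$-decomposition is almost-direct), but your third-paragraph version already gives the clean one-stroke argument the paper uses.
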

\begin{proof}
The Magnus generators $\xi_{i,j}$ ($1\leq i\neq j \leq n$) \eqref{Eq:2.4} generates $P\Sigma_n,$ so that the $\overline{\xi_{ij}}$ are a generating family of $P\Sigma^{ab}_n.$ 
The homomorphism $\tau^{P\Sigma}_1:P\Sigma^{ab}_n \rightarrow IA^{ab}_n \xrightarrow{\cong} V^{\ast} \bigotimes_{\mathbb{Z}} \Lambda^2 V $ sends this generators $\overline{\xi_{ij}}$ onto a free family of $V^{\ast} \displaystyle\bigotimes_{\mathbb{Z}} \Lambda^2 V$. Hence $\tau^P_1$ is an isomorphism on its image.
%$$\tau^P_1:P\Sigma^{ab}_n\hookrightarrow Im(\tau^P_1)\subset V^{\ast} \displaystyle\bigotimes_{\mathbb{Z}} \Lambda^2 V$$ 
\end{proof}$ $\\
The group $Aut(F_n)/{IA}_n$ (isomorphic to $GL(n,\mathbb{Z})$) acts on $Gr^k(\mathcal{A}_n)$ via the action given by \eqref{Eq:2.11} and also on $\mathcal{L}_n(k)$ via the action given by  \eqref{Eq:2.10}  for each $k\geq 1.$ The following result is well known.
\begin{lem}{[ \cite{satoh16} , Lemma 3.11]}
The monomorphism $$\tau_k:Gr^k(\mathcal{A}_n)\hookrightarrow   V^{\ast} \bigotimes_{\mathbb{Z}} \mathcal{L}_n(k+1)$$ is an $Aut(F_n)/{IA}_n$-equivariant homomorphism. 
\end{lem}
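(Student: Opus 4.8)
The plan is to verify directly from the definitions that $\tau_k$ intertwines the two $Aut(F_n)/IA_n$-actions, i.e. that for every $\psi\in Aut(F_n)$ and every $\overline{\sigma}\in Gr^k(\mathcal{A}_n)$ one has $\tau_k(\overline{\psi}\cdot\overline{\sigma})=\overline{\psi}\cdot\tau_k(\overline{\sigma})$, where the left action is the one of \eqref{Eq:2.11} (conjugation) and the right action is the tensor product of the contragredient action of $GL(n,\mathbb{Z})$ on $V^{\ast}$ with the action of \eqref{Eq:2.10} on $\mathcal{L}_n(k+1)$. First I would recall that both actions are well defined: the action on $Gr^k(\mathcal{A}_n)$ factors through $Aut(F_n)/IA_n$ because $IA_n$ acts trivially on each $Gr^k(\mathcal{A}_n)$ (this is the standard fact that $\mathcal{A}_n(1)$ acts trivially on the graded pieces of its own Andreadakis filtration, which follows from (I)), and the action on $V^{\ast}\otimes_{\mathbb{Z}}\mathcal{L}_n(k+1)$ factors through $Aut(F_n)/IA_n$ because $IA_n$ acts trivially on $V=F_n^{ab}$ and hence on $V^{\ast}$, and it acts trivially on $\mathcal{L}_n(k+1)$ by (II).

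The computational heart is a short manipulation. Fix $\psi\in Aut(F_n)$, $\sigma\in\mathcal{A}_n(k)$, and a generator $x_i$. By definition \eqref{Johnson homomo}, $\tau_k(\overline{\psi^{-1}\sigma\psi})=\sum_i x_i^{\ast}\otimes\overline{x_i^{-1}(\psi^{-1}\sigma\psi)(x_i)}$. Writing $\psi(x_i)=\prod_j x_j^{a_{ji}}\cdot w$ with $w\in\Gamma_n(2)$, where $(a_{ji})=\mathcal{N}(\psi)$, one expands $(\psi^{-1}\sigma\psi)(x_i)=\psi^{-1}\big(\sigma(\psi(x_i))\big)$ and uses the elementary identity $g^{-1}\sigma(g)\in\Gamma_n(k+1)$ for all $g$ together with $(g h)^{-1}\sigma(gh)\equiv (h^{-1}\sigma(h))\cdot (g^{-1}\sigma(g))^{h}$ modulo higher terms, plus (II) to discard the contribution of $w\in\Gamma_n(2)$ (its Johnson contribution would land in $\mathcal{L}_n(k+2)$). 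This shows $x_i^{-1}\sigma(\psi(x_i))$ depends, modulo $\Gamma_n(k+2)$, only on $\mathcal{N}(\psi)$ applied to the "input" slot and on $\psi$ acting (via $\lambda$ on $\mathcal{L}_n(k+1)$, equivalently via $\mathcal{N}(\psi)$) on the "output" slot; applying $\psi^{-1}$ and passing to the graded level then produces exactly $(\mathcal{N}(\psi)^{-1})^{T}$ on the $V^{\ast}$ factor and $\mathcal{N}(\psi)$ on the $\mathcal{L}_n(k+1)$ factor, which is the definition of $\overline{\psi}\cdot\tau_k(\overline{\sigma})$.

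I expect the main obstacle to be bookkeeping rather than any genuine difficulty: one must be careful that the two occurrences of $\psi$ (the one rewriting the argument $x_i$ and the one being applied to the value) produce a contragredient pair, and that every error term is genuinely of weight $\geq k+2$ so that it vanishes in $\mathcal{L}_n(k+1)$ after the Johnson map — this is where (II) and Lemma~\ref{2.1.8} (the weight-additivity of commutators) are used repeatedly. Once the equivariance is checked on the generators $x_i$ of $F_n$ it holds on all of $V$ by linearity, and since the formula \eqref{Johnson homomo} is additive in $\overline{\sigma}$ the identity extends from a single automorphism to all of $Gr^k(\mathcal{A}_n)$, completing the proof.
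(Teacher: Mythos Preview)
Your approach is correct in spirit---a direct verification that $\tau_k$ intertwines the two actions---and this is exactly what the paper does. However, you make the computation much harder than it needs to be. By working in the $V^{\ast}\otimes_{\mathbb{Z}}\mathcal{L}_n(k+1)$ picture you are forced to expand $\psi(x_i)$ in coordinates, track matrix entries of $\mathcal{N}(\psi)$, invoke (II) to kill the $\Gamma_n(2)$-error, and then reassemble the contragredient action on $V^{\ast}$. The paper instead stays in the $\mathrm{Hom}_{\mathbb{Z}}(V,\mathcal{L}_n(k+1))$ picture, where the whole proof is a single line: for $x\in F_n$,
\[
\tau_k(\overline{\psi^{-1}\sigma\psi})(\overline{x})
=\overline{x^{-1}\psi^{-1}\sigma\psi(x)}
=\overline{\psi^{-1}\bigl(\psi(x)^{-1}\sigma(\psi(x))\bigr)}
=\bigl(\overline{\psi}^{-1}\cdot\tau_k(\overline{\sigma})\cdot\overline{\psi}\bigr)(\overline{x}),
\]
using only that $\psi^{-1}$ is a homomorphism and $\psi^{-1}(\psi(x))=x$. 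No coordinate expansion, no weight estimates, no appeal to (II) are needed.

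Two small slips in your write-up: the expression ``$x_i^{-1}\sigma(\psi(x_i))$'' should be ``$\psi(x_i)^{-1}\sigma(\psi(x_i))$'' (otherwise it is not even in $\Gamma_n(k+1)$), and the weight-additivity you want is Robinson's lemma $(\Gamma_p,\Gamma_q)\subseteq\Gamma_{p+q}$, not the result labelled~\ref{2.1.8} in the paper. Neither affects the validity of your argument, but they would need fixing in a final version.
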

%\begin{proof}
%For all $\overline{\sigma} \in Gr^k(\mathcal{A}_n)$ and $\overline{\phi} \in Aut(F_n)/{IA}_n,$ we want to check that  $\tau_k(\bar{\phi}\cdot \bar{\sigma})= \bar{\phi}^{-1} \cdot \tau_k(\overline{\sigma}) \cdot\bar{\phi}.$ Let $x_i \in V.$
%\begin{eqnarray}
%\tau_k(\bar{\phi}\cdot \bar{\sigma})(x_i)&=&x^{\ast}_i \otimes \tau_k(\bar{\phi} ^{-1} \bar{\sigma}\;\bar{\phi})(x_i)=x^{\ast}_i \otimes\overline{x^{-1}_i \phi^{-1} \sigma \;\phi(x_i) } \nonumber \\
%&=&x^{\ast}_i \otimes\overline{\phi^{-1}(\phi(x_i)^{-1} \sigma(\phi(x_i)))}=(\bar{\phi}^{-1} \cdot \tau_k(\overline{\sigma}) \cdot\bar{\phi})(x_i).
%\end{eqnarray}
%\end{proof}
\begin{defi}
For each $k\geq1,$ the $Aut(F_n)/{IA}_n$-equivariant monomorphism $\tau_k$ is called the $k$-th Johnson homomorphism of $Aut(F_n).$
\end{defi}
Let $B$ denote a subgroup of $P\Sigma_n$ and let $\mathcal{B}(k)=\mathcal{A}_n(k) \bigcap B$ denote the Andreadakis filtration restricted to $B.$ The group $Gr^k(\mathcal{B})= \mathcal{B}(k)/\mathcal{B}(k+1)$ is a subgroup of $Gr^k(\mathcal{A}_n).$ Let $\tau^B_k$ denote the $k$-th Johnson homomorphism restricted to $Gr^k(\mathcal{B}).$ Next, define the following homomorphism $$\tau^{(B)}_1:=\tau^B_1 \circ \mathfrak{J}_1:gr^1(B) \rightarrow Gr^1(\mathcal{B})\hookrightarrow {IA}_n^{ab}$$ 
obtained by composing the canonical homomorphism $\mathfrak{J}_1:gr^1(B) \rightarrow Gr^1(\mathcal{B})$ induced by the inclusion $\Gamma_k(B)\subseteq \mathcal{B}(k)$  with the natural homomorphism $\tau^B_1: Gr^1(\mathcal{B})\hookrightarrow {IA}_n^{ab}.$ 

\begin{prop}
Let $B$ denote a subgroup of $P\Sigma_n$. If the homomorphism $\tau^{(B)}_1:gr^1(B) \rightarrow {IA}_n^{ab}$ is injective, then we have $\mathcal{B}(2)=\Gamma_2(B).$ \label{2.2.7}
\end{prop}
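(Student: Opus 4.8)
The plan is to observe that the hypothesis forces the canonical map $\mathfrak{J}_1$ on the degree-one graded piece to be an isomorphism, and that this is literally the equality $\mathcal{B}(2)=\Gamma_2(B)$.

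First I would record the shape of the two filtrations in degree one. Since $B\leq P\Sigma_n\leq {IA}_n=\mathcal{A}_n(1)$, we have $\mathcal{B}(1)=\mathcal{A}_n(1)\cap B=B=\Gamma_1(B)$. Hence both graded pieces in degree one are quotients of $B$: namely $gr^1(B)=B/\Gamma_2(B)$ and $Gr^1(\mathcal{B})=\mathcal{B}(1)/\mathcal{B}(2)=B/\mathcal{B}(2)$. The canonical homomorphism $\mathfrak{J}_1:gr^1(B)\rightarrow Gr^1(\mathcal{B})$ induced by the inclusion $\Gamma_2(B)\subseteq\mathcal{B}(2)$ is then nothing but the projection $B/\Gamma_2(B)\twoheadrightarrow B/\mathcal{B}(2)$; in particular $\mathfrak{J}_1$ is always surjective, with $\ker\mathfrak{J}_1=\mathcal{B}(2)/\Gamma_2(B)$.

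Second I would invoke the factorization $\tau^{(B)}_1=\tau^B_1\circ\mathfrak{J}_1$. If $\tau^{(B)}_1$ is injective, then for any $x\in\ker\mathfrak{J}_1$ we get $\tau^{(B)}_1(x)=\tau^B_1(\mathfrak{J}_1(x))=\tau^B_1(0)=0$, whence $x=0$; so $\mathfrak{J}_1$ is injective, and being also surjective it is an isomorphism. Therefore $\mathcal{B}(2)/\Gamma_2(B)=\ker\mathfrak{J}_1=0$, i.e. $\mathcal{B}(2)=\Gamma_2(B)$, which combined with the always-valid inclusion $\Gamma_2(B)\subseteq\mathcal{B}(2)$ finishes the argument.

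There is no genuine obstacle here: the statement is entirely formal once one notes that $\mathcal{B}(1)=B$ (so that $\mathfrak{J}_1$ is onto with the stated kernel) and that $\mathfrak{J}_1$ is precisely the first map in the composition defining $\tau^{(B)}_1$. The only point requiring care is not to confuse the two directions — $\mathfrak{J}_1$ is automatically surjective, so it is exactly its injectivity that upgrades it to an isomorphism and collapses $\mathcal{B}(2)/\Gamma_2(B)$ to the trivial group.
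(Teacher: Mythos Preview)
Your proof is correct, but it takes a different (and in fact more elementary) route than the paper's own argument.

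The paper argues as follows: the injectivity of $\tau^{(B)}_1:B^{ab}\to IA_n^{ab}$ means that the natural map $B^{ab}\to IA_n^{ab}$ has trivial kernel, i.e.\ $\Gamma_2(B)=\Gamma_2(IA_n)\cap B$; it then invokes Bachmuth's theorem $\Gamma_2(IA_n)=\mathcal{A}_n(2)$ to conclude $\Gamma_2(B)=\mathcal{A}_n(2)\cap B=\mathcal{B}(2)$. Your argument, by contrast, never leaves $B$: you simply note that $\mathfrak{J}_1:B/\Gamma_2(B)\twoheadrightarrow B/\mathcal{B}(2)$ is the quotient map with kernel $\mathcal{B}(2)/\Gamma_2(B)$, and that injectivity of the composite $\tau^{(B)}_1=\tau^B_1\circ\mathfrak{J}_1$ forces $\mathfrak{J}_1$ itself to be injective, hence this kernel is trivial. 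This is purely formal and makes no appeal to Bachmuth's result (or indeed to anything special about $IA_n$); it would go through verbatim for any pair of $N$-series on $B$ with $\Gamma_\bullet(B)\subseteq\mathcal{B}(\bullet)$ and $\Gamma_1(B)=\mathcal{B}(1)$. The paper's approach, on the other hand, makes the connection with the ambient Andreadakis filtration on $IA_n$ explicit, which is thematically consistent with the rest of the section but logically unnecessary here.
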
 
\begin{proof}
Assume that $\tau^{(B)}_1:gr^1(B) \rightarrow {IA}_n^{ab}$ is injective. Thus we have $\Gamma_2(B)=\Gamma_2({IA}_n)\bigcap B.$ Since  $\Gamma_2({IA}_n)=\mathcal{A}_n(2)$ by [\cite{bachmut},{Lemma 5}], we deduce $\Gamma_2(B)=\Gamma_2({IA}_n)\bigcap B=\mathcal{A}_n(2)\bigcap B=\mathcal{B}(2).$
\end{proof}

\section{Andreadakis equality for partial inner automorphism group}

This section consists of three parts: in the first we show that $I_n$ has a very direct connection with the upper triangular McCool group; in the second we determine the structure of the standard graded Lie algebra $gr^{\ast}(I_n)$ over $\mathbb{Z}$; in the third we show an equality between the Andreadakis filtration restricted to I n and its lower central series. We first start by defining the partial inner automorphism group $I_n$.

\subsection{Partial inner automorphism group $I_n $ }
Bardakov and Neshchadim \cite{barda17} defined the subgroup $V_p$ ($2\leq p \leq n$) of $P\Sigma_{n}$ generated by $\nu_{p,i}=\xi_{1,i}\xi_{2,i} \cdots \xi_{p,i}$ which act on $F_n$ as follows:
$$\begin{array}{ccccc}
\nu_{p,i} & : & F_n & \longrightarrow & F_n\\
& & x_k & \longmapsto & \left\{ \begin{array}{cl} \ x^{-1}_i x_k \; x_i & \textrm{if } 1\leq k \leq p \\ x_k  & \text{if} \; p < k \leq n \end{array}\right.\\
\end{array}$$
where $1\leq i \leq p.$ Every automorphism $\nu_{pi}$ is an inner automorphism of $F_p=\langle x_1,\dots,x_p\rangle.$ The subgroup $V_p$ is isomorphic to the inner automorphism group $Inn(F_p)$ of $F_p.$ The free group $F_p$ has a trivial center for $p\geq2$ and hence $V_p=F_p.$ Thus $V_p$ is a free group of rank $p$ on $\{\nu_{p,1},\dots,\nu_{p,p} \}.$ The partial inner automorphism group $I_n$ is the union of the subgroups $V_2,\dots,V_n$ denoted $\langle V_2, V_3, \dots, V_n \rangle.$ The group $I_n$ $(n\geq 2)$ can be realized as an almost-direct product of free groups described as follows. It is the iterated semidirect product
\begin{eqnarray}
I_n= V_n \rtimes_{\eta_{n}} (V_{n-1} \rtimes_{\eta_{n-1}} (\dots (V_3 \rtimes_{\eta_{3}} V_2)\dots))
\label{semidirectproductI} 
\end{eqnarray}
The structure of this iterated semidirect product \eqref{semidirectproductI} is given by the homomorphism $\eta_{p}:\rtimes^{p-1}_{k=2} V_k \rightarrow IA(V_p)$  such that 
$$ \eta_{m}(\nu_{q,j})(\nu_{p,i})=\nu^{-1}_{q,j} . \nu_{p,i} . \nu_{q,j}= \left\{ \begin{array}{cl} \ \nu_{p,i} &  j=i  \\ \nu_{p,i} &  i>q\\ \nu^{-1}_{p,j} . \nu_{p,i} . \nu_{p,j}  &   j\neq i \text{ and }  i\leq q \end{array}\right.$$
where $1\leq i \leq p,$ $1\leq j \leq q$ and $2\leq q < p \leq n.$ Here $IA(V_p)$ is the group of automorphisms of $V_p$ acting trivially on its abelianization. The proposition \ref{2.1.18} immediately implies the following proposition.
\begin{prop}{[\cite{barda17}, Proposition 1]}\\
The group $I_n=\rtimes^{n}_{k=2} V_k$ has a finite presentation with generators $\nu_{p,i}$ for $ 2 \leq p \leq n,$ $ 1\leq i\leq p$ and with the following relations	
\begin{eqnarray}
\left[\nu_{p,i} , \; \nu_{q,j} \right]&=&\displaystyle{1\!\!1}  \textrm{ if } j=i \nonumber \\
\left[\nu_{p,i} , \; \nu_{q,j} \right]&=&\displaystyle{1\!\!1}  \textrm{ if }  i>q \nonumber\\
\left[\nu_{p,i} , \; \nu_{q,j} \right]&=& \left[\nu_{pi} , \nu_{pj} \right] \text{ if }  j\neq i \text{ and }  i\leq q 
\label{Eq:3.26}
\end{eqnarray}	
where $1\leq i \leq p,$ $1\leq j \leq q$ and $2\leq q < p \leq n.$ Moreover the abelianization $I^{ab}_n$ is a free abelian group of rank $\frac{n(n+1)}{2}-1,$ a basis being provided by the class $\overline{\nu_{p,i}}$ of $\nu_{p,i}.$
\label{8}
\end{prop}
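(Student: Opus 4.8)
The plan is to deduce the statement directly from Proposition \ref{2.1.18}, applied to the iterated semidirect product \eqref{semidirectproductI}. First I would verify the hypotheses of that proposition. Each $V_k$ is the inner automorphism group $Inn(F_k)$, and since $F_k$ has trivial centre for $k\geq 2$ it is isomorphic to $F_k$; hence $V_k$ is a free group of rank $k$ on $\nu_{k,1},\dots,\nu_{k,k}$. The semidirect product is almost-direct: reading off the displayed formula for $\eta_m(\nu_{q,j})(\nu_{p,i})=\nu_{q,j}^{-1}\nu_{p,i}\nu_{q,j}$, each image of a generator of $V_p$ under $\eta_p(\nu_{q,j})$ is a conjugate of that generator (either $\nu_{p,i}$ itself, or $\nu_{p,j}^{-1}\nu_{p,i}\nu_{p,j}$), so $\eta_p(\nu_{q,j})\in IA(V_p)$ and acts trivially on $V_p^{ab}$. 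Thus $I_n=\rtimes^n_{k=2}V_k$ is an almost-direct product of the free groups $V_k$, to which Proposition \ref{2.1.18} applies, Cohen's index $p$ running here over $2,\dots,n$.

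Proposition \ref{2.1.18} then yields a presentation of $I_n$ with generators $\nu_{p,i}$ for $2\leq p\leq n$, $1\leq i\leq p$, and, for every quadruple $(p,i,q,j)$ with $2\leq q<p\leq n$, $1\leq i\leq p$, $1\leq j\leq q$, the relation $\nu_{p,i}\,\nu_{q,j}=\nu_{q,j}\,\eta_p(\nu_{q,j})(\nu_{p,i})$. Substituting the three cases of the formula for $\eta_p(\nu_{q,j})(\nu_{p,i})$: when $j=i$ or $i>q$ the right-hand side is $\nu_{q,j}\nu_{p,i}$, so the relation reads $\left(\nu_{p,i},\nu_{q,j}\right)=\displaystyle{1\!\!1}$; when $j\neq i$ and $i\leq q$ the right-hand side is $\nu_{q,j}\nu_{p,j}^{-1}\nu_{p,i}\nu_{p,j}$, so after left-multiplying by $\nu_{q,j}^{-1}$ and prepending $\nu_{p,i}^{-1}$ the relation reads $\left(\nu_{p,i},\nu_{q,j}\right)=\left(\nu_{p,i},\nu_{p,j}\right)$. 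These are exactly the relations \eqref{Eq:3.26}. The generating set has $\sum_{p=2}^{n}p=\frac{n(n+1)}{2}-1=\frac{n^2+n-2}{2}$ elements and the relations are indexed by finitely many quadruples, so the presentation is finite.

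For the abelianization I would invoke the fact, part of Proposition \ref{2.1.18}, that $w^{p,q}_{i,j}\in\left(I_n,I_n\right)$; equivalently, every relation in \eqref{Eq:3.26} equates one commutator to another commutator or to the identity, so it becomes trivial modulo $\Gamma_2(I_n)$. Hence $I_n^{ab}$ is the free abelian group on the classes $\overline{\nu_{p,i}}$, of rank $\frac{n^2+n-2}{2}$. As a cross-check one can use Lemma \ref{2.26}: under $P\Sigma_n^{ab}\hookrightarrow V^{\ast}\otimes_{\mathbb{Z}}\Lambda^2 V$ the element $\overline{\nu_{p,i}}=\overline{\xi_{1,i}}+\cdots+\overline{\xi_{p,i}}$ maps to a linearly independent family, so the $\overline{\nu_{p,i}}$ are already independent in $P\Sigma_n^{ab}$, a fortiori in $I_n^{ab}$.

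The only real work here is the bookkeeping that matches Cohen's index convention in Proposition \ref{2.1.18} with the one used for \eqref{semidirectproductI}, together with the short case analysis collapsing the conjugation words $w^{p,q}_{i,j}$ to the commutator relations \eqref{Eq:3.26}; I do not expect any genuine obstacle, since the almost-direct product structure of $I_n$ is already in hand.
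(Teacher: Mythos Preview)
Your proposal is correct and follows exactly the approach the paper takes: the paper simply states that Proposition~\ref{2.1.18} ``immediately implies'' this proposition, and you have spelled out precisely how, including the case-by-case translation of the conjugation formula for $\eta_p(\nu_{q,j})(\nu_{p,i})$ into the commutator relations~\eqref{Eq:3.26} and the rank count for $I_n^{ab}$. The cross-check via Lemma~\ref{2.26} is a pleasant extra that the paper does not include at this point but uses later in the proof of Theorem~\ref{3.3.7}.
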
  
In \cite{cohen}, Cohen determined the structure of the cohomology ring of an almost direct arbitrary product of free groups. As a consequence [Theorem 3.1, \cite{cohen}], we know how to determine the structure of the cohomology ring $H^{\ast}(I_n,\mathbb{Z})$ of $I_n.$ The reader will find the description of $H^{\ast}(I_n,\mathbb{Z})$ in an Appendix \ref{chap: Anneau de cohomologie du groupe}. Moreover we remark that the subgroup $P\Sigma^+_{n}$ of $P\Sigma_n$ generated by all automorphisms $\xi_{i,j}$ with $i> j$ is contained in $I_n$ for all $n\geq1$. In fact, we can express each generator $\xi_{i,j}$ ($i> j$) of $P\Sigma^+_{n}$ via those of $I_n$ with the formulas 
\begin{eqnarray}
\xi_{i,j}=\nu^{-1}_{(i-1),j} \; \nu_{i,j} \text{ for all  } 1\leq j< i\leq n.
\end{eqnarray}
We also note that $I_n$ $(n\geq1)$ is the product $V_n\cdot P\Sigma^+_n$. Indeed, since $V_n$ is normal in $P\Sigma_n$ (see Appendix \ref{sec:Groupe quotient de la série central descendante pour le groupe 1}), the product $V_n\cdot P\Sigma^+_n$ is a subgroup of $P\Sigma_n,$ and we can obtain generators of this group from generators of $I_n,$ and vice versa. On the other hand, it is clear that we can consider the group $I_n$ as a subgroup of $I_{n+1}.$ Moreover $I_{n+1}$ is a semidirect product $V_{n+1}\rtimes I_n.$ We define the projection $q_n:I_{n+1}\twoheadrightarrow I_n$ from this semidirect product decomposition. It induces an isomorphism between $I_n$ and $Inn(P\Sigma^+_{n+1})$, the inner automorphism group of $P\Sigma^+_{n+1}$.
\begin{theo}
The application $P\Sigma^+_{n+1}\subset I_{n+1}\overset{q_n}{\twoheadrightarrow} I_n$ induces an isomorphism $Inn(P\Sigma^+_{n+1})\cong \overline{P\Sigma}^+_{n+1} \cong I_n$.
\label{3.3.2}
\end{theo}
\begin{proof}
The projection $q_n$ is the quotient of $I_{n+1}=V_{n+1}\cdot P\Sigma^+_{n+1}$ by $V_{n+1}.$ It thus induces
\begin{eqnarray}
P\Sigma^+_{n+1}/(V_{n+1}\cap P\Sigma^+_{n+1})\cong V_{n+1}\cdot P\Sigma^+_{n+1}/V_{n+1}\cong I_n.
\end{eqnarray}
On the other hand we have $V_{n+1}\cap P\Sigma^+_{n+1}=Z( P\Sigma^+_{n+1})$. Indeed, an element of this intersection must be an inner automorphism fixing $x_1,$ which implies that it must be in $\langle \nu_{n+1,1} \rangle.$ Conversely $\langle \nu_{n+1,1} \rangle$ is clearly in this intersection. Hence $V_{n+1} P\Sigma^+_{n+1}=\langle \nu_{n+1,1} \rangle.$ Since $Z( P\Sigma^+_{n+1})=\langle \nu_{n+1,1}=\xi_{2,1}\cdot \xi_{3,1} \cdots \xi_{n+1,1}\rangle$ by [\cite{cohen pruidze}, Proposition 2.3], we can deduce $V_{n+1} \cap P\Sigma^+_{n+1}=Z( P\Sigma^+_{n+1})$. Therefore, we have $Inn( P\Sigma^+_{n+1})=P\Sigma^+_{n+1}/Z( P\Sigma^+_{n+1}) \cong I_n$.
\end{proof}

\subsection{Factor groups of the lower central series for $I_n$ }
From \eqref{semidirectproductI}, one can define the short exact sequence of groups $$\{\displaystyle{1\!\!1}\}\rightarrow V_n \longrightarrow I_n \longrightarrow I_{n-1} \rightarrow \{\displaystyle{1\!\!1}\}.$$ The group $I_{n-1}$ acts by conjugation on $V^{ab}_n$ trivially. Namely the action by conjugation of $\nu_{j,q}$ on $\nu_{n,p}$ is given by
$$\nu^{-1}_{j,q}.\nu_{n,p}.\nu_{j,q}= \left\{ \begin{array}{cl} \ \nu_{n,p} & \textrm{ if } p=q  \textrm{ or } p>j \\ \nu_{n,p} . \left[\nu_{n,p} \;, \nu_{n,q} \right] & \text{ if }  p\neq q \text{ and }  p\leq j \end{array}\right.$$
where $q\leq j \leq n-1.$ By Theorem \ref{2.1.16}, there is a short exact sequence
$$\{0\} \rightarrow gr^k(V_n) \rightarrow gr^k(I_n) \rightarrow gr^k(I_{n-1}) \rightarrow \{0\} \text{ $ $ for all } k\geq 1.$$ 
By induction on $n,$ we have $gr^k(I_n)=\displaystyle\bigoplus^{n}_{m=2} gr^k(V_m)=\displaystyle\bigoplus^{n}_{m=2}\mathcal{L}_m(k)$ for all $k\geq 1.$ Hence we arrive at the following result.
\begin{prop}
 $gr^k(I_n)$ is a free abelian group whose the rank is
\begin{eqnarray}
\phi_k(I_n) = \sum \limits^{n}_{m=2} r_{m}(k),
\end{eqnarray}
where $r_{m}(k)$ is the rank of the free abelian group $\mathcal{L}_m(k)$ \eqref{rangdeL}.
\label{lie:3.9}
\end{prop}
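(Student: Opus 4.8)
The plan is to proceed by induction on $n$, using the iterated almost-direct product decomposition \eqref{semidirectproductI} of $I_n$ together with the Falk--Randell splitting (Theorem \ref{2.1.16}). First I would record the two structural inputs already available: each $V_m$ is a free group of rank $m$ on $\{\nu_{m1},\dots,\nu_{mm}\}$, and from \eqref{semidirectproductI} one has $I_n = V_n \rtimes_{\eta_n} I_{n-1}$. The point with actual content is to check that this is an \emph{almost}-direct product, i.e. that $I_{n-1}$ acts trivially on $V_n^{ab}$. This is immediate from the displayed conjugation formula for $\nu_{jq}^{-1}\nu_{np}\nu_{jq}$: each generator $\nu_{np}$ of $V_n$ is sent either to itself or to $\nu_{np}\,(\nu_{np},\nu_{nq})$, and since $(\nu_{np},\nu_{nq})\in\Gamma_2(V_n)$ the induced map on $V_n^{ab}=V_n/\Gamma_2(V_n)$ is the identity.

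With this in hand, Theorem \ref{2.1.16} applied to $I_n = V_n \rtimes I_{n-1}$ gives, for every $k\geq 1$, a split short exact sequence of abelian groups $0 \to gr^k(V_n) \to gr^k(I_n) \to gr^k(I_{n-1}) \to 0$, hence by the corollary following Theorem \ref{2.1.16} one gets $gr^k(I_n)\cong gr^k(V_n)\oplus gr^k(I_{n-1})$ and $\phi_k(I_n)=\phi_k(V_n)+\phi_k(I_{n-1})$. Next I would identify $gr^k(V_n)$: since $V_n\cong F_n$ is free of rank $n$, $gr^k(V_n) = \mathcal{L}_n(k)$, which by Hall's theorem (Theorem \ref{2.1.15}) is free abelian with basis the basic commutators of weight $k$, of rank $r_n(k)$ given by Witt's formula \eqref{rangdeL}.

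Finally, the induction runs as follows. For $n=2$ we have $I_2 = V_2 = F_2$, so $gr^k(I_2)=\mathcal{L}_2(k)$ is free abelian of rank $r_2(k)=\sum_{m=2}^{2}r_m(k)$, which is the base case. Assuming the statement for $I_{n-1}$, the splitting above yields $gr^k(I_n)\cong \mathcal{L}_n(k)\oplus\bigoplus_{m=2}^{n-1}\mathcal{L}_m(k)=\bigoplus_{m=2}^{n}\mathcal{L}_m(k)$, a finite direct sum of free abelian groups and therefore free abelian, with rank $\phi_k(I_n)=r_n(k)+\sum_{m=2}^{n-1}r_m(k)=\sum_{m=2}^{n}r_m(k)$. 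I do not anticipate any genuine obstacle: every ingredient is quoted from Section 2, and the only verification carrying information is the triviality of the $I_{n-1}$-action on $V_n^{ab}$, which the conjugation formula makes transparent; the remainder is bookkeeping with split exact sequences and Witt's formula.
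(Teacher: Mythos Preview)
Your proposal is correct and follows essentially the same route as the paper: both use the decomposition $I_n = V_n \rtimes I_{n-1}$, verify triviality of the induced action on $V_n^{ab}$ from the conjugation formula, apply the Falk--Randell splitting (Theorem \ref{2.1.16}), and conclude by induction on $n$ that $gr^k(I_n)=\bigoplus_{m=2}^{n}\mathcal{L}_m(k)$. Your write-up is in fact slightly more explicit than the paper's, spelling out the base case $I_2=V_2$ and the rank bookkeeping.
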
$ $\\
Now we shall determine the structure of the Lie algebra $gr^{\ast}(I_n)$ $(n\geq 2)$ over $\mathbb{Z}.$ Let $\mathfrak{y}_{pi}$ $(1\leq i \leq p \leq n)$ denote the image of $\nu_{p,i}$ in $gr^{1}(I_n).$ For $2\leq p \leq n,$ let $\mathfrak{Y}_p:=\{\mathfrak{y}_{p1}, \dots, \mathfrak{y}_{pp}\} $ and $L[\mathfrak{Y}_p]$ be the corresponding free Lie algebra of rank $p$ over $\mathbb{Z}.$ By [\cite{ihara}, Lemma 3.1.1], it is clear, first, that the almost-direct product $I_n=V_n \rtimes I_{n-1}$ induces a decomposition of the associated Lie algebras 
\begin{eqnarray}
gr^{\ast}(I_n)=gr^{\ast}(V_n) \bigoplus gr^{\ast}(I_{n-1}).
\end{eqnarray}
Since $V_n$ is the free group of rank $n$ on $\nu_{n1},\dots,\nu_{nn}$, $gr^{\ast}(V_n)$ is the free Lie algebra of rank $n$ over $\mathbb{Z}$ generated by the classes $\mathfrak{y}_{n1}, \dots \mathfrak{y}_{nn} \in gr^1(V_n)$ (by Prop \ref{2.1.11}) of $\nu_{n1},\dots,\nu_{nn}.$ Hence 
\begin{eqnarray}
gr^{\ast}(I_n)=L[\mathfrak{Y}_n] \bigoplus gr^{\ast}(I_{n-1}).
\label{decomposition26}
\end{eqnarray} 
The following lemma describes the structure of the Lie algebra $gr^{\ast}(I_n).$ 
\begin{lem}
The Lie algebra $gr^{\ast}(I_n)$ $(n\geq 2)$ is the quotient of the free Lie algebra $L[\mathfrak{Y}_p]$ over $\mathbb{Z}$ generated by elements $\mathfrak{y}_{pi}$ for $ 2 \leq p \leq n$ and $ 1\leq i\leq p$ modulo the following relations.
\begin{eqnarray}
\left[\mathfrak{y}_{pi}, \mathfrak{y}_{qj}\right]&=&0  \textrm{ if } j=i \nonumber \\
\left[\mathfrak{y}_{pi}, \mathfrak{y}_{qj}\right]&=&0  \textrm{ if }  i>q \nonumber\\
\left[\mathfrak{y}_{pi}, \mathfrak{y}_{qj} - \mathfrak{y}_{pj}\right]&=& 0  \text{ if }  j\neq i \text{ and }  i\leq q
\label{Eq:grI} 
\end{eqnarray}
where $1\leq i \leq p,$ $1\leq j \leq q$ and $2 \leq q < p \leq n.$
\label{Lie:3.9}
\end{lem}
\begin{proof}
First, by Prop.\ref{8}, the $\overline{\nu}_{pi}$'s generate $gr^{\ast}(I_n)$ and satisfy the relations \eqref{Eq:grI}. Denoting by $\mathcal{r}_n$ the quotient of the free Lie algebra over $\mathbb{Z}$ generated by the symbols $\mathfrak{y}_{pi}$'s modulo these relations. Let $q_n:\mathcal{r}_n \twoheadrightarrow gr^{\ast}(I_n)$ be the projection induced by $\mathfrak{y}_{pi} \mapsto \overline{\nu}_{pi}$ and let  $\pi_n:\mathcal{r}_n \rightarrow \mathcal{r}_{n-1}$ be the epimorphism defined by $\mathfrak{y}_{ni}\mapsto 0$ $(1\leq i \leq n).$ We shall show by induction on $n\geq 2$ that $q_n$ an isomorphism. Note that for $n = 2,$ it is obviously an isomorphism. For $n\geq 3,$ we have a commutative diagram
\begin{eqnarray}
\begin{tikzcd}
\{0\}\arrow{r} & \mathfrak{K}_n\arrow{r}\arrow{d}{k_n} & \mathcal{r}_n \arrow{r}{\pi_n}\arrow{d}{q_n \text{ (surjective) } } & \mathcal{r}_{n-1}\arrow{r}\arrow{d}{\text{isomorphism by induction assumption}} & \{0\} \\
\{0\}\arrow{r} & L[\mathfrak{Y}_n]\arrow{r}{ } & gr^{\ast}(I_n)\arrow{r} & gr^{\ast}(I_{n-1})\arrow{r} & \{0\}
\end{tikzcd}
\end{eqnarray}
where $\mathfrak{K}_n:=ker(\pi_n)$ and the second line comes from the decomposition \eqref{decomposition26}.The kernel $\mathfrak{K}_n$ of $\pi_n$ is an ideal of $\mathcal{r}_n$ generated by $\mathfrak{y}_{n1}, \dots \mathfrak{y}_{nn}.$ But in fact $\mathfrak{K}_n$ is generated by $\mathfrak{y}_{nj}$'s as a Lie sub-algebra. Since $L[\mathfrak{Y}_n]$ is free of rank $n$ on the $\mathfrak{y}_{nj}$ and $\mathfrak{K}_n$ is generated by these $\mathfrak{y}_{nj}$'s $(1\leq j \leq n).$ Hence $k_n$ must be an isomorphism. By induction assumption and using the five lemma, we deduce that $q_n$ is an isomorphism.
\end{proof}
\begin{remark}
A presentation of $gr^{\ast}(I_n)$ can also be found in [\cite{metaftsis}, Corollary 6], but the proof is based on different techniques than the one given here.
\end{remark}

\subsection{Andreadakis equality for $I_n$} 
Let $\mathcal{I}_n(k)$ be the k-th Andreadakis filtration restricted to $I_n$ defined by $\mathcal{I}_n(k):= \mathcal{A}_n(k) \bigcap I_n.$ For any $k\geq 1,$ let $Gr^k(\mathcal{I}_n):= \mathcal{I}_n(k) /\mathcal{I}_n(k+1)$ and let $\tau^I_k:=\tau_k|_{Gr^k(\mathcal{I}_n)} : Gr^k(\mathcal{I}_n) \hookrightarrow V^{\ast} \displaystyle\bigotimes_{\mathbb{Z}} \mathcal{L}_n(k+1)$ denote the monomorphism induced by the restriction of $\tau_k$ \eqref{Johnson homomo} to $Gr^k(\mathcal{I}_n).$ There exists a natural homomorphism $$\mathbb{g}_k: gr^k(I_n) \longrightarrow Gr^k(\mathcal{I}_n)$$ induced by the inclusion $\Gamma_{k}(I_n) \subseteq\mathcal{I}_n(k)$. We define a homomorphism $\tau^{(I)}_k$  to be the composition of $\mathbb{g}_k$ and $\tau^I_k$: 
\begin{eqnarray}
\tau^{(I)}_k:= \tau^I_k \circ \mathbb{g}_k:gr^k(I_n) \longrightarrow  V^{\ast} \displaystyle\bigotimes_{\mathbb{Z}} \mathcal{L}_n(k+1).
\end{eqnarray}
We will now determine the values of the image $Im(\tau^{(I)}_k )$ of $\tau^{(I)}_k .$ Recall that $I_n$ is generated by $\nu_{i,p}$ $(2\leq i \leq n, \; 1\leq p \leq i)$ acting on the
generators of $F_n$ according to the following rule 
\begin{eqnarray}
\nu_{i,p}(x_r)= \left\{ \begin{array}{cl} \ x^{-1}_p x_r \; x_p & \textrm{ if } 1\leq r \leq i \\ x_r  & \text{ if } \; r > i \end{array}\right.
\end{eqnarray}
Thus for $1\leq p_1, p_2 \leq i,$ we have
\begin{eqnarray}
(\nu_{i,p_1} \nu_{i,p_2})(x_r)=\nu_{i,p_1}(\nu_{i,p_2}(x_r))= \left\{ \begin{array}{cl} \ x^{-1}_{p_1}x^{-1}_{p_2} x_r \; x_{p_2} x_{p_1} & \textrm{ if } 1 \leq r \leq i \\ x_r  & \text{ if }  r > i \end{array}\right.
\end{eqnarray}
Consider  now a product given by $\Theta= \nu^{\epsilon_1}_{i,p_1}.\nu^{\epsilon_2}_{i,p_2}.\cdots.\nu^{\epsilon_k}_{i,p_k}$ where $\epsilon_t= \pm1$ and $1\leq p_t \leq i$ with $t \in \{1, \dots, k\} .$ Let us see how $\Theta$ acts on each generator $x_r \in F_n.$ The action of $\Theta$ on $x_r \in F_n$ is given by
\begin{eqnarray}
\Theta (x_r)= \left\{ \begin{array}{cl} \ (x^{\epsilon_1}_{p_1}. x^{\epsilon_2}_{p_2} .\cdots. x^{\epsilon_k}_{p_k})^{-1} \; x_r\; \;x^{\epsilon_1}_{p_1}. x^{\epsilon_2}_{p_2} .\cdots. x^{\epsilon_k}_{p_k} & \textrm{ if } 1\leq r \leq i \\ x_r  & \text{ if } \;  r>i  \end{array}\right.
\end{eqnarray}
Hence the action of commutator $\left[\nu_{i,p_1}, \; \nu_{i,p_2}  \right]\in \Gamma_{2}(I_n)$ on each generator $x_r$ is given by
\begin{eqnarray}
\left[\nu_{i,p_1}, \; \nu_{i,p_2}  \right](x_r)= \left\{ \begin{array}{cl} \left[x_{p_1}, \; x_{p_2}\right]^{-1} \cdot x_r \cdot \left[x_{p_1}, \; x_{p_2}\right]& \textrm{ if } 1\leq  r \leq i \\ x_r  & \text{ if } \;  r >i \end{array}\right. 
\end{eqnarray}
We next act the $k$-commutator $\mathfrak{T}=\left[\dots \left[\dots \left[\nu_{i,p_1},\; \nu_{i,p_2} \right] \dots \right],\nu_{i,p_k} \right] \in \Gamma_{k}(I_n)$ on each generator $x_r \in F_n$ and we obtain
\begin{eqnarray}
\mathfrak{T} (x_r)= \left\{ \begin{array}{cl} \left[\dots,\left[x_{p_1}, \; x_{p_2}\right] \dots ], x_{p_k}\right]^{-1} \cdot x_r \cdot \left[\dots, \left[x_{p_1}, \; x_{p_2}\right] \dots ], x_{p_k}\right]& \textrm{ if } 1\leq r \leq i \\ x_r  & \text{ if } r >i \end{array}\right. 
\end{eqnarray}
\begin{prop}
Let $\mathfrak{T}=\left[ \dots ,\left[\nu_{i,p_1}, \; \nu_{i,p_2}\right],\dots ] ,\nu_{i,p_k}\right]\in \Gamma_{k}(I_n).$ If $1 \leq p_1, p_2, \dots, p_k \leq i $ for $2\leq i \leq n,$ then
\begin{eqnarray}
\tau^{(I)}_k(\overline{\mathfrak{T}})=\left\{ \begin{array}{cl} x^{\ast}_r \otimes \left[x_{p_1}, \dots,x_{p_k}, x_r \right] & \textrm{ if } 1\leq r \leq i \\ x_r  & \text{ if } r >i \end{array}\right.
\end{eqnarray}
Moreover, $Im(\tau^{(I)}_k)$ is generated by
\begin{eqnarray}
\{ x^{\ast}_r \otimes \left[x_{p_1}, \cdots,x_{p_k}, x_r \right]    \in \; V^{\ast} \otimes_{\mathbb{Z}} \mathcal{L}_n(k+1) : \; 1 \leq p_1, p_2, \cdots, p_k ,r\leq i \text{  $ $ and $ $ } 2\leq i\leq n \}. 
\end{eqnarray} 
\end{prop}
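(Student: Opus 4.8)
The plan is in two parts: first a direct computation of $\tau^{(I)}_k(\overline{\mathfrak{T}})$ from the definition of the Johnson homomorphism, and then a description of $Im(\tau^{(I)}_k)$ obtained from a convenient generating family of $gr^k(I_n)$.

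For the value of $\tau^{(I)}_k(\overline{\mathfrak{T}})$, recall that $\tau^{(I)}_k=\tau^I_k\circ\mathbb{g}_k$ and that $\mathfrak{T}=(\nu_{ip_1},\dots,\nu_{ip_k})$ lies in $\Gamma_k(I_n)\subseteq\mathcal{I}_n(k)$; hence $\tau^{(I)}_k(\overline{\mathfrak{T}})$ is just $\tau_k$ evaluated on the class of $\mathfrak{T}$ in $Gr^k(\mathcal{A}_n)$, namely $\sum_{r=1}^{n}x^{\ast}_r\otimes\overline{x^{-1}_r\mathfrak{T}(x_r)}$. I would then substitute the formula for $\mathfrak{T}(x_r)$ displayed just above the statement: for $r>i$ one has $\mathfrak{T}(x_r)=x_r$, so these summands vanish; for $1\le r\le i$, putting $c:=(\dots((x_{p_1},x_{p_2}),\dots),x_{p_k})\in\Gamma_n(k)$, one has $\mathfrak{T}(x_r)=c^{-1}x_rc$, so by Property \ref{2.1.1}(1), $x^{-1}_r\mathfrak{T}(x_r)=x_r^{-1}c^{-1}x_rc=(x_r,c)$. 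The only delicate point is the passage to the associated graded: since $c\in\Gamma_n(k)$ we get $(x_r,c)\in(F_n,\Gamma_n(k))=\Gamma_n(k+1)$, and by the very definition of the bracket on $gr(F_n)$ its class modulo $\Gamma_n(k+2)$ is $[\,\overline{x_r},\overline{c}\,]$ in $\mathcal{L}_n(k+1)$, where $\overline{c}=[x_{p_1},\dots,x_{p_k}]$ in $\mathcal{L}_n(k)$; antisymmetry of the bracket rewrites this as $[x_{p_1},\dots,x_{p_k},x_r]$ (up to the overall sign fixed by the bracket convention). This gives the asserted value.

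For the ``moreover'' I would resist applying Proposition \ref{2.1.11} to $I_n$ directly, since that only produces left-normed commutators in arbitrary generators $\nu_{pi}$, which may mix several first indices. Instead I would invoke the Falk--Randell splitting: iterating Theorem \ref{2.1.16} along $I_n=\rtimes^{n}_{m=2}V_m$ and inducting on $n$ yields $gr^k(I_n)=\bigoplus_{m=2}^{n}gr^k(V_m)$ (cf. \eqref{decomposition26}); since $V_m$ is free of rank $m$ on $\nu_{m1},\dots,\nu_{mm}$, Proposition \ref{2.1.11} applied to $V_m$ shows $gr^k(V_m)$ is generated by the classes of the left-normed commutators $(\nu_{mp_1},\dots,\nu_{mp_k})$ with $1\le p_1,\dots,p_k\le m$, all of which share the single first index $m$. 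Therefore $gr^k(I_n)$ is generated by the classes $\overline{\mathfrak{T}}$ of the commutators $\mathfrak{T}$ appearing in the statement, so that $Im(\tau^{(I)}_k)$ is generated by the corresponding elements $\tau^{(I)}_k(\overline{\mathfrak{T}})$; by the first part each of these is a $\mathbb{Z}$-linear combination of the pure tensors $x^{\ast}_r\otimes[x_{p_1},\dots,x_{p_k},x_r]$ with $1\le r\le i$, which is the claimed description.

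I do not expect any genuinely hard step here: once the commutator identities of Property \ref{2.1.1} and the definition of the Lie bracket on $gr(F_n)$ are in hand, the computation is mechanical. The two points to handle carefully are (i) legitimacy of reading $(x_r,c)\bmod\Gamma_n(k+2)$ as a Lie bracket, i.e. that this class depends only on $x_r\bmod\Gamma_n(2)$ and $c\bmod\Gamma_n(k+1)$ --- precisely what makes $gr(F_n)$ a Lie algebra --- and (ii) using the direct-sum decomposition of $gr^{\ast}(I_n)$ rather than Proposition \ref{2.1.11} naively, so that the generating commutators all lie inside a single free factor $V_i$. The sign and ordering conventions in the bracket are cosmetic and do not affect the generation statement.
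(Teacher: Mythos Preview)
Your proposal is correct and follows essentially the same route as the paper: the paper does not even give a separate proof for this proposition, treating it as an immediate consequence of the displayed formula for $\mathfrak{T}(x_r)$ together with the definition of $\tau_k$ and the previously established decomposition $gr^k(I_n)=\bigoplus_{m=2}^{n}\mathcal{L}_m(k)$. Your write-up simply spells out these implicit steps, and your care in invoking the Falk--Randell splitting (rather than Proposition~\ref{2.1.11} applied directly to $I_n$) to ensure the generating commutators lie in a single $V_i$ is exactly the right point to make explicit.
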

For any $k\geq 1$ and $1\leq s\leq n,$ we denote by $\{ c^{s}_j(k)\}_{1\leq j\leq n^s_k}$ all basic commutators of weight $k$ among the components $\{x_1, \dots, x_{s} \}.$ As we know the basic commutators  of weight $k$ form a basis of the free abelian group $\mathcal{L}_s(k)$ (see Theorem \ref{2.1.15}). We arrive at the following result 
\begin{lem}
$\mathcal{H}:=\{ x^{\ast}_s \otimes \left[ c^{s}_1(k),x_s \right]  , \dots,  x^{\ast}_s \otimes \left[c^{s}_{n^{s}_k}(k), x_s \right] \; : \; 1\leq s \leq n  \}$ is a basis of $Im(\tau^{(I)}_k)$ as a free abelian group of rank $\sum \limits^{n}_{s=2} r_{s}(k)$ where $r_s(k)$ is the rank of the free abelian group  $\mathcal{L}_s(k).$ \label{3.3.6}
\end{lem}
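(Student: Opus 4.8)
Suppose $\mathcal{B}$ is a basis; I'll first explain why it has the right cardinality, then why it spans and is independent.

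The plan is to identify $Im(\tau^{(I)}_k)$ with a sum of the images coming from each free subgroup $V_i$. From the previous proposition, $Im(\tau^{(I)}_k)$ is generated by the elements $x^{\ast}_r \otimes \left[x_{p_1}, \dots, x_{p_k}, x_r\right]$ with all indices $p_1,\dots,p_k,r$ between $1$ and $i$, as $i$ ranges over $2 \leq i \leq n$. The key structural observation is that $gr^k(I_n) = \bigoplus_{m=2}^{n} gr^k(V_m) = \bigoplus_{m=2}^{n} \mathcal{L}_m(k)$ (established just before Prop.~\ref{Lie:3.9}), and that under $\tau^{(I)}_k$ the summand $gr^k(V_i)$ — which is the free abelian group on the basic commutators of weight $k$ in $x_1,\dots,x_i$ — maps via $c \mapsto \sum_{r=1}^{i} x^{\ast}_r \otimes [c, x_r]$ for $c$ a basic commutator of weight $k$ among $x_1,\dots,x_i$. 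So first I would make precise that $Im(\tau^{(I)}_k)$ is spanned by the vectors $u_i(c) := \sum_{r=1}^{i} x^{\ast}_r \otimes [c, x_r]$, for $2 \leq i \leq n$ and $c$ ranging over the basic commutators $c^i_j(k)$ of weight $k$ in $x_1,\dots,x_i$; the claimed basis $\mathcal{B}$ will be obtained by a change of basis from this spanning set.

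Next I would show $\mathcal{B}$ spans. Each element $x^{\ast}_s \otimes [c^s_j(k), x_s]$ of $\mathcal{B}$ is recovered from the $u$-vectors by telescoping: $x^{\ast}_s \otimes [c, x_s] = u_s(c) - u_{s-1}(c)$ whenever $c$ is a basic commutator of weight $k$ in $x_1,\dots,x_{s-1}$ (hence also in $x_1,\dots,x_s$), and $x^{\ast}_s \otimes [c,x_s] = u_s(c)$ when $c$ is a basic commutator of weight $k$ in $x_1,\dots,x_s$ involving $x_s$; together these express every element of $\mathcal{B}$ as an integral combination of the $u_i(c)$, and conversely the $u_i(c)$ are integral combinations of $\mathcal{B}$, so the two sets generate the same subgroup. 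This also shows the cardinality of $\mathcal{B}$ matches the number of generators, namely $\sum_{s=2}^{n} r_s(k)$, since for fixed $s$ the commutators $c^s_1(k),\dots,c^s_{n^s_k}(k)$ are a basis of $\mathcal{L}_s(k)$ of rank $r_s(k)$.

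Finally, linear independence of $\mathcal{B}$: a relation $\sum_{s=2}^n \sum_j a_{s,j}\, x^{\ast}_s \otimes [c^s_j(k), x_s] = 0$ in $V^{\ast} \otimes_{\mathbb{Z}} \mathcal{L}_n(k+1)$ decomposes according to the direct sum $V^{\ast} \otimes \mathcal{L}_n(k+1) = \bigoplus_{s=1}^n x^{\ast}_s \otimes \mathcal{L}_n(k+1)$, so for each fixed $s$ we get $\sum_j a_{s,j}\,[c^s_j(k), x_s] = 0$ in $\mathcal{L}_n(k+1)$; since $c^s_j(k)$ for $1\leq j\leq n^s_k$ is a subfamily of the basic commutators of weight $k$ in $x_1,\dots,x_n$ and each is of the form $(c^s_j, \dots)$ with $c^s_j > x_t$ for the appropriate tails, the elements $[c^s_j(k), x_s]$ are distinct basic commutators of weight $k+1$ in $F_n$ (using that $x_s$ is among the generators $x_1,\dots,x_s$ appearing in $c^s_j$, so condition (c) of Definition~\ref{2.1.13} is met), hence linearly independent in $\mathcal{L}_n(k+1)$ by Theorem~\ref{2.1.15}, forcing all $a_{s,j} = 0$. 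The main obstacle I expect is precisely this last point — checking that all the brackets $[c^s_j(k), x_s]$ are genuine \emph{distinct} basic commutators (or at least an independent set) in $\mathcal{L}_n(k+1)$, which requires a careful bookkeeping argument with Hall's ordering rather than any deep idea; alternatively one can avoid it by counting ranks: the surjection $gr^k(I_n) \twoheadrightarrow Im(\tau^{(I)}_k)$ between free abelian groups of the same rank $\sum_{s=2}^n r_s(k)$ — once the rank of the image is shown to be at least that large via one explicit independent subset — must be an isomorphism, and then $\mathcal{B}$, having the right cardinality and spanning, is automatically a basis.
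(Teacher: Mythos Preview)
Your overall strategy matches the paper's: span $Im(\tau^{(I)}_k)$ via the telescoping $u_s(c)-u_{s-1}(c)$ and then prove independence by projecting onto each summand $x^{\ast}_s\otimes\mathcal{L}_n(k+1)$ of $V^{\ast}\otimes\mathcal{L}_n(k+1)$. Your spanning discussion is actually more explicit than the paper's one-line ``it is clear''.

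The gap is in the independence step. Your assertion that each $[c^{s}_j(k),x_s]$ is itself a basic commutator of weight $k+1$ is false. Condition~(c) of Definition~\ref{2.1.13} demands that if $c^{s}_j(k)=(c',c'')$ then $x_s\geq c''$; but whenever $c''$ has weight $\geq 2$ (which occurs already for $k\geq 4$) we have $c''>x_s$ in the Hall order, and the condition fails. For instance, with $s\geq 3$ the weight-$4$ basic commutator $c=((x_3,x_1),(x_2,x_1))$ gives $[c,x_s]$ with $x_s<(x_2,x_1)$, so $[c,x_s]$ is not basic. Your justification (``$x_s$ is among the generators appearing in $c^{s}_j$'') is irrelevant to condition~(c). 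Your fallback rank-counting argument is circular as written: to bound the rank of the image from below by $\sum r_s(k)$ you would already need an independent subset of that size, which is exactly what is at stake.

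The paper sidesteps the Hall bookkeeping. After the same reduction to $\sum_j\lambda_{s,j}\,[c^{s}_j(k),x_s]=0$ in $\mathcal{L}_n(k+1)$ for each fixed $s$, it simply invokes the injectivity of the adjoint map $\beta:\mathcal{L}_s(k)\hookrightarrow\mathcal{L}_s(k+1)$, $c\mapsto[c,x_s]$ (injective for $k\geq 2$, since the centraliser of a generator in a free Lie algebra is the line it spans). This yields $\sum_j\lambda_{s,j}\,c^{s}_j(k)=0$ in $\mathcal{L}_s(k)$, where the $c^{s}_j(k)$ \emph{are} a Hall basis, so all $\lambda_{s,j}=0$. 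Replacing your basic-commutator claim with this injectivity statement closes the gap.
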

\begin{proof}
First, it is clear that $\mathcal{H}$ generates $Im(\tau^{(I)}_k).$ It thus suffices to show that the elements of $\mathcal{H}$ are linearly independent. Assume that $$\sum \limits_{{s=1}}^n \sum \limits_{j=1}^{n^{s}_k} \lambda_{s,j} \;x^{\ast}_s\otimes \left[c^{s}_j(k), x_s\right]=0$$ for $\lambda_{s,j} \in \mathbb{Z}.$ For each $1 \leq s \leq n$ fixed, we then have $$\sum \limits_{j=1}^{n^{s}_k} \lambda_{s,j} \left[c^{s}_j(k), x_s \right]=0.$$
We consider the injective homomorphism $\beta:\mathcal{L}_s(k) \hookrightarrow \mathcal{L}_s(k+1).$ Since  the elements $\left[c^{s}_j(k), x_s \right]$ belong to $\mathcal{L}_s(k+1)$ and the elements $c^{s}_j(k)$ for $1\leq j \leq n^{s}_k$ are linearly independent in $\mathcal{L}_s(k),$ the elements $\left[c^{i}_j(k), x_r\right]$ are linearly independent in $\mathcal{L}_s(k+1).$ This show that $\lambda_{s,j}=0$ for $1\leq j \leq n^{s}_k.$ Thus the elements in $\mathcal{B}$ are linearly independent and hence $rank(\mathcal{H})= \sum \limits^{n}_{s=2} r_{s}(k).$
\end{proof}
The next statement is an affirmative answer to question \eqref{Eq:2.12} on Andreadakis equality for $I_n.$
\begin{theo}
The subgroup $I_n$ of ${IA}_n$ satisfies the Andreadakis equality:
\begin{eqnarray}
\forall k\geq 1, \text{  $ $   $ $ }\mathcal{I}_n(k)=\mathcal{A}_n(k)\bigcap I_n=\Gamma_{k}(I_n).
\end{eqnarray}
\label{3.3.7}
\end{theo}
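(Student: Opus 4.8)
The plan is to argue by induction on $k$, using the Johnson homomorphism to force the comparison map $\mathbb{g}_k\colon gr^k(I_n)\to Gr^k(\mathcal{I}_n)$ to be an isomorphism. The base case is immediate: $\mathcal{I}_n(1)=\mathcal{A}_n(1)\cap I_n={IA}_n\cap I_n=I_n=\Gamma_1(I_n)$. For the inductive step, assume $\mathcal{I}_n(k)=\Gamma_k(I_n)$. Since the lower central series always lies inside the Andreadakis filtration, $\Gamma_{k+1}(I_n)\subseteq\mathcal{A}_n(k+1)\cap I_n=\mathcal{I}_n(k+1)$, so it suffices to prove the reverse inclusion $\mathcal{I}_n(k+1)\subseteq\Gamma_{k+1}(I_n)$.

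The heart of the argument is that $\mathbb{g}_k$ is injective for every $k$, which I would establish by a rank count that does not use the inductive hypothesis. Since $\tau^{(I)}_k=\tau^I_k\circ\mathbb{g}_k$ with $\tau^I_k$ a monomorphism, injectivity of $\mathbb{g}_k$ is equivalent to injectivity of $\tau^{(I)}_k\colon gr^k(I_n)\to V^{\ast}\otimes_{\mathbb{Z}}\mathcal{L}_n(k+1)$. By construction $\tau^{(I)}_k$ maps onto $Im(\tau^{(I)}_k)$, which Lemma \ref{3.3.6} identifies as a free abelian group of rank $\sum_{s=2}^{n}r_s(k)$; on the other hand $gr^k(I_n)$ is free abelian of the same rank $\sum_{m=2}^{n}r_m(k)$ by the earlier proposition computing $\phi_k(I_n)$. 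A surjection between free abelian groups of equal finite rank is an isomorphism --- it splits because the target is free, and comparing ranks forces its kernel, a subgroup of a free abelian group of rank zero, to vanish --- so $\tau^{(I)}_k$, and hence $\mathbb{g}_k$, is injective.

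With $\mathbb{g}_k$ injective the induction closes at once. Given $\phi\in\mathcal{I}_n(k+1)\subseteq\mathcal{I}_n(k)=\Gamma_k(I_n)$, the element $\phi$ has a class $\overline{\phi}\in gr^k(I_n)=\Gamma_k(I_n)/\Gamma_{k+1}(I_n)$, and by definition $\mathbb{g}_k(\overline{\phi})$ is the class of $\phi$ in $\mathcal{I}_n(k)/\mathcal{I}_n(k+1)$, which is trivial since $\phi\in\mathcal{I}_n(k+1)$. Injectivity of $\mathbb{g}_k$ gives $\overline{\phi}=0$, i.e. $\phi\in\Gamma_{k+1}(I_n)$, so $\mathcal{I}_n(k+1)=\Gamma_{k+1}(I_n)$ and the induction is complete. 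The real obstacle is not this formal bootstrapping but the two inputs feeding it --- the explicit description of $Im(\tau^{(I)}_k)$ and, above all, the rank equality $\mathrm{rank}\,gr^k(I_n)=\mathrm{rank}\,Im(\tau^{(I)}_k)$ of Lemma \ref{3.3.6}, which itself rests on the identification of the Lie algebra $gr^{\ast}(I_n)$ in Proposition \ref{Lie:3.9}, the Falk--Randell splitting, and Hall's basis theorem; once that numerical coincidence is secured, everything else is soft.
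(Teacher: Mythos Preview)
Your proof is correct and follows essentially the same strategy as the paper: induct on $k$, use Lemma~\ref{3.3.6} together with the rank computation $\phi_k(I_n)=\sum_{m=2}^n r_m(k)$ to see that the surjection $\tau^{(I)}_k\colon gr^k(I_n)\twoheadrightarrow Im(\tau^{(I)}_k)$ is between free abelian groups of equal rank and hence an isomorphism, and conclude that $\mathbb{g}_k$ is injective. Your version is in fact slightly cleaner, since the rank argument already covers $k=1$ and so yields $\mathcal{I}_n(2)=\Gamma_2(I_n)$ directly, whereas the paper treats $k=2$ separately by appealing to Satoh's result $\Gamma_2(P\Sigma_n)=\mathcal{A}_n(2)\cap P\Sigma_n$ before invoking the rank comparison for $k\geq 3$.
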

\begin{proof}
We show this theorem by induction on $k\geq 1.$ By definition we have $\Gamma_{1}(I_n)=I_n= \mathcal{I}_n(1).$ The canonical homomorphism $\mathfrak{S}:I_n^{ab}\rightarrow P\Sigma^{ab}_n$ is injective because the classes $\overline{\nu}_{i,p}$ of $\nu_{i,p}$ (which generate $I_n$) are a generating family of $I_n^{ab}$ and the homomorphism $\mathfrak{S}$ sends  this generating family  $\overline{\nu}_{i,p}$ onto a free family of $P\Sigma_n^{ab},$ (see Lemme \ref{2.26}). Hence we have $\Gamma_2(I_n)=\Gamma_2(P\Sigma_n)\bigcap I_n.$ Since $\Gamma_2(P\Sigma_n)=\mathcal{A}_n(2)\bigcap P\Sigma_n$ by [\cite{satoh12},{Corollary 2.2.}], we deduce $\Gamma_{2}(I_n)= \mathcal{I}_n(2).$ Assume that we have $\Gamma_{k}(I_n)= \mathcal{I}_n(k)$ for each $k\geq 3.$ Then we have the surjective homomorphism $$\tau^{(I)}_k: gr^k(I_n) \xrightarrow[]{\mathbb{g}_k} Gr^k(\mathcal{I}_n)  \xrightarrow[]{\tau^{I}_k} Im(\tau^{I}_k)=Im(\tau^{(I)}_k).$$ 
By Lemma \ref{3.3.6}, $Im(\tau^{(I)}_k)$ has the same rank as $gr^k(I_n),$ as free abelian group. The homomorphism $\tau^{(I)}_k$ is then an isomorphism and hence $\mathbb{g}_k$ must be injective. Thus $\Gamma_{k+1}(I_n)= \mathcal{I}_n(k+1).$ 
\end{proof}

\appendix

\section{Lower central series of McCool groups}
\subsection{Factor groups of the lower central series for McCool group }
\label{sec:Groupe quotient de la série central descendante pour le groupe 1}
In this appendix, we examine the rank of $gr^k(P\Sigma_n)$ for all $k,n\geq 1.$ The abelian group $P\Sigma^{ab}_n$ is isomorphic to $\mathbb{Z}^{n(n-1)}$ with basis $\overline{\xi_{i,j}}$ $(1\leq i\neq j \leq n).$ Let $\{ \xi^{\ast}_{i,j} | \; 1\leq i\neq j \leq n  \} $ be the dual of the basis $\{ \overline{\xi_{i,j}} | \; 1\leq i\neq j \leq n  \}$ of $P\Sigma^{ab}_n.$ Since the first homology group $H_1(P\Sigma_n, \mathbb{Z})$ of $P\Sigma_n$ is equal to $P\Sigma^{ab}_n$, $H_1(P\Sigma_n, \mathbb{Z})$ is a free abelian group with basis $\overline{\xi_{i,j}}.$ Thus 
\begin{eqnarray}
H_1(P\Sigma_n, \mathbb{Z})=\bigoplus_{1 \leq i\neq j \leq n} \mathbb{Z} \; \overline{\xi_{i,j}}.
\end{eqnarray} 
Hence the first cohomology group $H^1(P\Sigma_n, \mathbb{Z}),$ of $P\Sigma_n$ is given by the dual basis
\begin{eqnarray}
\xi^{\ast}_{i,j} \left( \overline{\xi_{k,l}} \right)= \begin{cases}
1 & \text{  $ $ if } k=i \text{  $ $ and } j=l \\
0 & \text{  $ $ otherwise }
\end{cases}
\end{eqnarray}
Brownstein and Lee \cite{browlee} are the first who determined in the years 93, the cohomology group $H^k(P\Sigma_n, \mathbb{Z})$ of $P\Sigma_n$ for $k=1,2$ and conjectured a presentation of the cohomology ring $H^{\ast}(P\Sigma_n, \mathbb{Z})$ of $P\Sigma_n.$ This conjecture has been proved by Jensen, McCammond and Meier \cite{jensen mc}. In particular, they determined the rank of each $k$-th cohomology group $H^k(P\Sigma_n, \mathbb{Z})$ of $P\Sigma_n$ given by
\begin{eqnarray}
rank\left(H^k(P\Sigma_n, \mathbb{Z})\right)= \binom{n-1}{k} n^k. \label{Eq:3.3}
\end{eqnarray}
Let $V_n$ be the free subgroup of $P\Sigma_n$ of rank $n$ on $\nu_{n,1},\dots\nu_{n,n}.$ It is easy to see that $V_n$ (which is equal to $Inn(F_n)$) is normal in $P\Sigma_n.$ Namely, the generators $\xi_{i,j}$ of $P\Sigma_n$ act by conjugation on the generators of $V_n.$ The action of $\xi_{i,j}$ on $\nu_{n,p}$ for all $1\leq p \leq n$ is given by:
\begin{eqnarray}
\xi_{i,j} \cdot  \nu_{n,p}:=\xi^{-1}_{i,j} \;  \nu_{n,p} \; \xi_{i,j}= \left\{ \begin{array}{cl} \ \nu_{n,p} & \textrm{ if } p\neq i   \\ \nu^{-1}_{n,j}  \nu_{n,p} \;  \nu_{n,j}  & \text{ if }  p\neq i  \end{array}\right.
\label{Eq:3.4}
\end{eqnarray}
Since $\xi_{n,j}=\nu^{-1}_{(n-1),j} \nu_{n,j}$ for all $1\leq j \leq n$ and $\xi_{n-1,n}=(\xi_{1,n}.\dots. \xi_{n-2,n} )^{-1} \nu_{n,n},$ the group $OP\Sigma_{n}=P\Sigma_{n}/V_n$ is generated by the images of all the generators of $P\Sigma_{n-1}$ and of elements $\xi_{1,n},\dots ,\xi_{n-2,n}$ modulo $V_n.$ We have also $P\Sigma_n= V_n \rtimes OP\Sigma_n$  which is given by the short split exact sequence
\begin{eqnarray}
\{\displaystyle{1\!\!1}\}\longrightarrow V_n \longrightarrow P\Sigma_{n} \longrightarrow OP\Sigma_{n} \longrightarrow \{\displaystyle{1\!\!1}\}.
\end{eqnarray}
From \eqref{Eq:3.4}, the action by conjugation $OP\Sigma_{n}$ on $V^{ab}_n$ is trivial. By Theorem \ref{2.1.16}, we have
\begin{equation}
gr^k(P\Sigma_n)= gr^k(V_n) \bigoplus gr^k(OP\Sigma_{n}) \text{  $ $  for each } k \geq 1 
\end{equation}
So far, as far as I know, a basis of the abelian group $gr^k(P\Sigma_n)$ is not, as far as I know, yet known and thus the ranks $\phi_k(P\Sigma_n)$ of $gr^k(P\Sigma_n)$ are not yet generally determined for all $k\geq 2$ and $n\geq 4.$ We will now examine the ranks $\phi_k(P\Sigma_n)$ of $gr^k(P\Sigma_n)$ for some cases. We start in the case $n = 2$ and $n = 3$ (because $P\Sigma_{1}=\{\displaystyle{1\!\!1}\},$ there is nothing to say in this case). The groups $P\Sigma_{2} $ and $P\Sigma_{3}$ have simple structures and hence their associated ranks $\phi_k(P\Sigma_2)$ and $\phi_k(P\Sigma_3)$ respectively can be rapidly determined. \\
\textbf{The case $n=2.$} Since $OP\Sigma_{2}$ is the trivial group, $P\Sigma_{2}= V_2=\langle \nu_{21}, \nu_{12} \rangle$ is a free group of rank 2. Hence, it is immediate that $gr^k( P\Sigma_2)=\mathcal{L}_2(k),$ the free abelian group of rank $r_2(k).$ Thus 
\begin{equation}
\phi_k( P\Sigma_2)= r_2(k) \text{  $ $  for each } k \geq 1.
\label{Eq:3.7}
\end{equation}
\textbf{The case $n=3.$} The group $OP\Sigma_3$ has three generators $\{\overline{\xi_{1,2}}, \overline{\xi_{2,1}}, \overline{\xi_{1,3}} \}$ (modulo $V_3$). The generators of $P\Sigma_3$ satisfy, in total, the following nine relations:
\begin{equation}
\left[\xi_{1,2}, \; \xi_{3,2}\right]=\displaystyle{1\!\!1}  \nonumber,\\
\left[\xi_{1,3}, \; \xi_{2,3}\right]=\displaystyle{1\!\!1}\nonumber,\\
\left[\xi_{2,1}, \; \xi_{3,1}\right]=\displaystyle{1\!\!1} \nonumber,\\
\end{equation}
\begin{equation}
\left[\xi_{2,3}, \; \xi_{2,1}.\xi_{3,1}\right]=\displaystyle{1\!\!1}  \nonumber,\\
\left[\xi_{1,3}, \; \xi_{1,2}.\xi_{3,2}\right]=\displaystyle{1\!\!1}\nonumber,\\
\left[\xi_{1,2}, \; \xi_{1,3}.\xi_{2,3}\right]=\displaystyle{1\!\!1} \nonumber,\\
\end{equation}
\begin{equation}
\left[\xi_{3,2}, \; \xi_{2,1}.\xi_{3,1}\right] =\displaystyle{1\!\!1}  \nonumber,\\
\left[\xi_{3,1}, \; \xi_{1,2}.\xi_{3,2}\right]=\displaystyle{1\!\!1}\nonumber,\\
\left[\xi_{2,1}, \; \xi_{1,3}.\xi_{2,3}\right]=\displaystyle{1\!\!1} \nonumber,\\
\end{equation}    
All these relations degenerate in $OP\Sigma_{3}.$
It follows that $OP\Sigma_{3}= \langle \overline{\xi_{1,2}},\overline{\xi_{2,1}}, \overline{\xi_{1,3}} \rangle $ is a free group of rank 3 and we arrive at the following result
\begin{lem}
$P\Sigma_{3}$ is an almost-direct product of two copies of free group $V_3 =F_3$ and $OP\Sigma_{3}=F_3$ of rank $3.$ That is, $$P\Sigma_{3}=V_3 \rtimes OP\Sigma_{3}.$$
\end{lem}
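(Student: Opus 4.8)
The plan is to assemble three ingredients, two of which are already available from the discussion above. First, recall that $V_3=Inn(F_3)$ is a free group of rank $3$ on $\nu_{31},\nu_{32},\nu_{33}$ and, by the conjugation formula \eqref{Eq:3.4}, is normal in $P\Sigma_3$; this gives the split short exact sequence $\{\displaystyle{1\!\!1}\}\to V_3\to P\Sigma_3\to OP\Sigma_3\to\{\displaystyle{1\!\!1}\}$ and hence a semidirect product decomposition $P\Sigma_3=V_3\rtimes OP\Sigma_3$. So only two things remain to be established: that the quotient $OP\Sigma_3$ is itself free of rank $3$, and that the decomposition is almost-direct.

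For the first point I would begin from McCool's presentation of $P\Sigma_3$, with the six generators $\xi_{i,j}$ ($1\le i\ne j\le 3$) subject to the relations \eqref{Eq:2.7} written out explicitly above, and then form the quotient by $V_3$ by adjoining the three relators $\nu_{31}=\xi_{2,1}\xi_{3,1}$, $\nu_{32}=\xi_{1,2}\xi_{3,2}$, $\nu_{33}=\xi_{1,3}\xi_{2,3}$ (here one uses the convention that the factor $\xi_{i,i}$ is omitted from $\nu_{pi}=\xi_{1,i}\cdots\xi_{p,i}$). These three relators permit the Tietze eliminations $\xi_{3,1}=\xi_{2,1}^{-1}$, $\xi_{3,2}=\xi_{1,2}^{-1}$, $\xi_{2,3}=\xi_{1,3}^{-1}$, reducing the generating set to $\overline{\xi_{1,2}},\overline{\xi_{2,1}},\overline{\xi_{1,3}}$. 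The heart of the argument is then to substitute these values into each of the nine McCool relations and verify that every one of them collapses to the trivial word: the three relations $[\xi_{1,2},\xi_{3,2}]$, $[\xi_{1,3},\xi_{2,3}]$, $[\xi_{2,1},\xi_{3,1}]$ become commutators of an element with its own inverse, while each of the remaining six contains one of the subwords $\xi_{2,1}\xi_{3,1}$, $\xi_{1,2}\xi_{3,2}$, $\xi_{1,3}\xi_{2,3}$, all of which have been set equal to $1$. Hence $OP\Sigma_3$ admits the presentation $\langle\,\overline{\xi_{1,2}},\overline{\xi_{2,1}},\overline{\xi_{1,3}}\mid\ \rangle$ and is free of rank $3$.

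For the second point, the triviality of the action of $OP\Sigma_3$ on $V_3^{ab}$ follows at once from \eqref{Eq:3.4}: conjugation by any $\xi_{i,j}$ fixes $\nu_{3p}$ when $p\ne i$ and sends it to $\nu_{3j}^{-1}\nu_{3p}\nu_{3j}$ when $p=i$, and in both cases the image is congruent to $\nu_{3p}$ modulo $\Gamma_2(V_3)$. Consequently $P\Sigma_3=V_3\rtimes OP\Sigma_3$ is an almost-direct product of two copies of the free group $F_3$, as claimed. The step that demands the most care is the middle one: one must keep the Tietze bookkeeping straight — in particular confirming that $V_3$ really is the subgroup generated by $\xi_{2,1}\xi_{3,1}$, $\xi_{1,2}\xi_{3,2}$, $\xi_{1,3}\xi_{2,3}$ — and be exhaustive in checking that not a single one of the nine McCool relations survives in the quotient; there is no conceptual obstacle beyond this routine verification.
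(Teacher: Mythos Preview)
Your proposal is correct and follows essentially the same approach as the paper: the paper likewise lists the nine McCool relations for $P\Sigma_3$, observes that they all ``degenerate'' in the quotient $OP\Sigma_3=P\Sigma_3/V_3$, and concludes that $OP\Sigma_3=\langle \overline{\xi_{1,2}},\overline{\xi_{2,1}},\overline{\xi_{1,3}}\rangle$ is free of rank $3$. Your write-up is in fact more explicit than the paper's, spelling out the Tietze eliminations $\xi_{3,1}=\xi_{2,1}^{-1}$, $\xi_{3,2}=\xi_{1,2}^{-1}$, $\xi_{2,3}=\xi_{1,3}^{-1}$ and why each relation collapses, and also recording the almost-direct verification via \eqref{Eq:3.4}, which the paper establishes earlier in the general discussion of $P\Sigma_n=V_n\rtimes OP\Sigma_n$.
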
$ $\\
As a consequence Theorem \ref{2.1.16}, we have the following result:
\begin{prop}
	There is an exact short sequence split		
\begin{eqnarray}
\{0\}\rightarrow gr^{k}(V_{3}) \rightarrow gr^{k}(P\Sigma_{3}) \rightarrow gr^{k}(OP\Sigma_{3}) \rightarrow \{0\}.
\end{eqnarray}
In other words, we have
\begin{eqnarray}
gr^{k}(P\Sigma_{3})=gr^{k}(V_{3}) \bigoplus gr^{k}(OP\Sigma_{3}) = \mathcal{L}_3(k) \bigoplus \mathcal{L}_3(k)  \text{  $ $  for each } k \geq 1
\end{eqnarray}
where $\mathcal{L}_3(k)$ is the free abelian group of rank $r_3(k).$
\label{3.1.1}
\end{prop}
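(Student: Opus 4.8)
The plan is to read the statement off as an immediate consequence of the Falk--Randell theorem (Theorem~\ref{2.1.16}) applied to the decomposition $P\Sigma_3 = V_3 \rtimes OP\Sigma_3$ furnished by the preceding lemma. First I would record that the two factors are already understood: $V_3 = Inn(F_3) \cong F_3$ is free of rank $3$ on $\nu_{31},\nu_{32},\nu_{33}$, and $OP\Sigma_3 = P\Sigma_3/V_3$ is free of rank $3$ on the classes $\overline{\xi_{1,2}},\overline{\xi_{2,1}},\overline{\xi_{1,3}}$ (this is the one non-formal input, obtained by checking that each of the nine McCool relations degenerates modulo $V_3$). I would then verify the hypothesis of Theorem~\ref{2.1.16}, namely that the conjugation action of $OP\Sigma_3$ on $V_3^{ab}$ is trivial: this is immediate from \eqref{Eq:3.4}, since each $\xi_{i,j}$ sends a generator $\nu_{np}$ either to itself or to a conjugate $\nu^{-1}_{nj}\nu_{np}\nu_{nj}$, and the latter is $\nu_{np}$ modulo $\Gamma_2(V_3)$; hence the semidirect product is almost-direct.

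Next, feeding $G = P\Sigma_3$, $N = V_3$, $H = OP\Sigma_3$ into Theorem~\ref{2.1.16} yields directly, for each $k \geq 1$, the split short exact sequence of abelian groups
$$\displaystyle{0\!\!0} \longrightarrow gr^k(V_3) \longrightarrow gr^k(P\Sigma_3) \longrightarrow gr^k(OP\Sigma_3) \longrightarrow \displaystyle{0\!\!0},$$
and, by the corollary immediately following that theorem, the direct sum decomposition $gr^k(P\Sigma_3) = gr^k(V_3) \bigoplus gr^k(OP\Sigma_3)$. It then remains to identify the summands: since $V_3 \cong F_3$ and $OP\Sigma_3 \cong F_3$ are both free of rank $3$, we have $gr^k(V_3) = gr^k(F_3) = \mathcal{L}_3(k)$ and likewise $gr^k(OP\Sigma_3) = \mathcal{L}_3(k)$ by the very definition of $\mathcal{L}_3(k)$, whence $gr^k(P\Sigma_3) = \mathcal{L}_3(k) \bigoplus \mathcal{L}_3(k)$. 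Finally, Theorem~\ref{2.1.15} together with Witt's formula \eqref{rangdeL} identifies each copy of $\mathcal{L}_3(k)$ as the free abelian group of rank $r_3(k)$, so in particular $\phi_k(P\Sigma_3) = 2\,r_3(k)$.

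I do not expect a genuine obstacle here: essentially all of the content has been pushed into the preceding lemma (the substantive point being that the nine McCool relations collapse modulo $V_3$, so that $OP\Sigma_3$ is free of rank $3$), and the remaining argument is a bookkeeping application of Theorem~\ref{2.1.16} together with the identification of the graded pieces of a free group of rank $3$ with $\mathcal{L}_3(k)$. The only step that demands a line of care is confirming the hypothesis of Theorem~\ref{2.1.16}, i.e. that \eqref{Eq:3.4} exhibits the product as \emph{almost}-direct and not merely semidirect; once that is in hand, everything else is forced.
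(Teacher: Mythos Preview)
Your proposal is correct and follows exactly the paper's approach: the paper presents this proposition explicitly ``As a consequence [of] Theorem~\ref{2.1.16}'' applied to the almost-direct product $P\Sigma_3 = V_3 \rtimes OP\Sigma_3$ established in the preceding lemma, with no further proof given. You have simply spelled out the details (triviality of the action on $V_3^{ab}$ via \eqref{Eq:3.4}, identification of both factors with $F_3$, and $gr^k(F_3)=\mathcal{L}_3(k)$) that the paper leaves implicit.
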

From Proposition \ref{3.1.1}, we then deduce that
\begin{equation}
\phi_k(P\Sigma_{3})=2\cdot r_3(k) \text{  $ $  for each } k\geq 1
\end{equation}
\textbf{The case $n\geq 4.$} The structure of $OP\Sigma_n$ is quite complicated, it is difficult to determine the rank $\phi_k(OP\Sigma_n)$ of $gr^k(OP\Sigma_n)$ and especially that of $gr^k(P\Sigma_n).$ However for $k=1$ it is not surprising that $gr^1(P\Sigma_{n})=P\Sigma^{ab}_{n}=\mathbb{Z}^{n(n-1)}$ and thus
\begin{equation}
\phi_1(P\Sigma_{n})=(n-1)\cdot r_n(1).
\end{equation}
We will calculate the rank $\phi_2(P\Sigma_{n})$ of $gr^2(P\Sigma_{n})$ and at the same time we determine a basis for $gr^2(P\Sigma_{n}).$ To determine the rank $\phi_2(P\Sigma_{n}),$ we will apply the exact 5-term homology sequence. Recall that if we have an extension of group $\{e_G\} \rightarrow N \rightarrow G \rightarrow Q \rightarrow \{e_G\}$  then there exists an exact sequence of the form (see [\cite{stallings},Theorem 2.1 ])
\begin{eqnarray}
H_2(G) \rightarrow H_2(Q) \rightarrow N/\left[G, N\right] \rightarrow H_1(G) \rightarrow H_1(Q) \rightarrow \{e_G\}.
\end{eqnarray}
We apply this to the exact short sequence $\{\displaystyle{1\!\!1}\} \rightarrow \Gamma_{2}(P\Sigma_{n}) \rightarrow P\Sigma_{n} \rightarrow P\Sigma^{ab}_{n} \rightarrow \{\displaystyle{1\!\!1}\}.$ Thus we obtain
\begin{equation}
H_2(P\Sigma_{n},\mathbb{Z}) \rightarrow H_2(P\Sigma^{ab}_{n}, \mathbb{Z}) \rightarrow gr^2(P\Sigma_{n})\rightarrow H_1(P\Sigma_{n}, \mathbb{Z}) \rightarrow P\Sigma^{ab}_{n} \rightarrow \{\displaystyle{1\!\!1}\}
\label{Eq:3.10}
\end{equation}
Or $H_1(P\Sigma_{n}, \mathbb{Z})=P\Sigma^{ab}_{n} ,$ then  \eqref{Eq:3.10}  becomes
\begin{equation}
H_2(P\Sigma_{n}, \mathbb{Z}) \rightarrow H_2(P\Sigma^{ab}_{n}, \mathbb{Z}) \rightarrow gr^2(P\Sigma_{n}) \rightarrow \{\displaystyle{1\!\!1}\} \label{Eq:3.11}
\end{equation} 
Therefore we come to this following result.
\begin{theo}
For all $n\geq 1,$ $gr^2(P\Sigma_{n})$ has for possible basis
\begin{eqnarray}
\{[ \xi_{ij}, \xi_{ji} ] \; | 1 \leq i < j \leq n \} \; \cup \{[ \xi_{ij}, \xi_{it} ]  | 1\leq i\leq n, i\neq j,t, 1\leq j<t \leq n\}.
\end{eqnarray} whose its rank is 
\begin{equation}
	\phi_2(P\Sigma_{n})=(n-1)\cdot r_n(2).
	\end{equation} 
	\label{3.1.2}
\end{theo}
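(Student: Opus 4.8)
The plan is to combine the finite presentation of $P\Sigma_n$ by the McCool relations \eqref{Eq:2.7} with the five-term homology sequence \eqref{Eq:3.11}. Since every McCool relator is a product of commutators, $P\Sigma^{ab}_n = H_1(P\Sigma_n,\mathbb{Z})$ is free abelian on the classes $\overline{\xi_{ij}}$ and $H_2(P\Sigma^{ab}_n,\mathbb{Z}) = \Lambda^2 P\Sigma^{ab}_n$; hence \eqref{Eq:3.11} presents $gr^2(P\Sigma_n)$ as the cokernel of $H_2(P\Sigma_n,\mathbb{Z}) \to \Lambda^2 P\Sigma^{ab}_n$, and its image $K$ is exactly the subgroup spanned by the abelianized relators. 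Abelianizing \eqref{Eq:2.7} (using $(ab,c)\equiv(a,c)(b,c)$ modulo $\Gamma_3$, item 4 of Property \ref{2.1.1}) shows that $K$ is generated by the three families
$$\overline{\xi_{ij}}\wedge\overline{\xi_{st}}\ \ (\{i,j\}\cap\{s,t\}=\emptyset),\qquad \overline{\xi_{ij}}\wedge\overline{\xi_{kj}}\ \ (i,j,k\ \text{distinct}),\qquad \overline{\xi_{ij}}\wedge\overline{\xi_{ik}}+\overline{\xi_{kj}}\wedge\overline{\xi_{ik}}\ \ (i,j,k\ \text{distinct}).$$
Thus $gr^2(P\Sigma_n)$ is the quotient of $\Lambda^2 P\Sigma^{ab}_n$ by the corresponding three relation types, which I label $(R1),(R2),(R3)$; in particular it is generated by the classes of the wedges $\overline{\xi_{ab}}\wedge\overline{\xi_{cd}}$.

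Next I would show that the proposed set $\mathcal{B}$ spans $gr^2(P\Sigma_n)$, by reducing an arbitrary generator $\overline{\xi_{ab}}\wedge\overline{\xi_{cd}}$ to an element of $\mathcal{B}$ according to the size and shape of $\{a,b\}\cap\{c,d\}$. If this intersection is empty the class is $0$ by $(R1)$; if $b=d$ it is $0$ by $(R2)$; if $a=c$ the wedge already has the shape $\overline{\xi_{ij}}\wedge\overline{\xi_{it}}$ and, after using antisymmetry to arrange $j<t$, lies in $\mathcal{B}$; if $a=d$ or $b=c$, relation $(R3)$ rewrites the wedge, up to sign, as a class with a common first index, hence again in $\mathcal{B}$; and if $\{a,b\}=\{c,d\}$ it equals $\pm\,\overline{\xi_{ij}}\wedge\overline{\xi_{ji}}$ with $i<j$, which is in $\mathcal{B}$. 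A direct count then gives $|\mathcal{B}| = \binom{n}{2} + n\binom{n-1}{2} = \tfrac{n(n-1)^2}{2} = (n-1)\,r_n(2)$.

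It remains to see that $\mathcal{B}$ is $\mathbb{Z}$-linearly independent in $gr^2(P\Sigma_n)$; equivalently, that no nontrivial $\mathbb{Z}$-combination of the wedges occurring in $\mathcal{B}$ lies in $K$. Here I would exploit the shape of the relators in $\Lambda^2 P\Sigma^{ab}_n$: every element of $\mathcal{B}$ is either a ``swap'' wedge $\overline{\xi_{ij}}\wedge\overline{\xi_{ji}}$ or a ``common first index'' wedge $\overline{\xi_{ij}}\wedge\overline{\xi_{it}}$; the relators $(R1)$ involve only disjoint-support wedges, the relators $(R2)$ only common-second-index wedges, and each relator $(R3)$ contributes, besides a common-first-index wedge, one ``twisted'' wedge $\overline{\xi_{kj}}\wedge\overline{\xi_{ik}}$ whose unique repeated index, together with the two remaining indices, recovers $(i,j,k)$, so that distinct $(R3)$-relators produce distinct (up to sign) twisted wedges. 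Comparing coefficients in the free abelian group $\Lambda^2 P\Sigma^{ab}_n$ then forces every $(R3)$-relator, and hence every $(R1)$- and $(R2)$-relator, to occur with coefficient $0$ in any combination supported on $\mathcal{B}$-wedges, so that combination is $0$. As a consistency check, the resulting rank $\tfrac{n^2(n-1)(n-2)}{2}$ of $K$ agrees with $\operatorname{rank} H^2(P\Sigma_n,\mathbb{Z}) = \binom{n-1}{2}n^2$ from \eqref{Eq:3.3}, i.e.\ with $H_2(P\Sigma_n,\mathbb{Z})\to\Lambda^2 P\Sigma^{ab}_n$ being injective, which also follows from $H^{\ast}(P\Sigma_n,\mathbb{Z})$ being generated in degree one \cite{jensen mc}. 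Combining the three steps, $gr^2(P\Sigma_n)$ is generated by $\mathcal{B}$ and is free of rank $|\mathcal{B}| = (n-1)r_n(2)$, so $\mathcal{B}$ is a basis.

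The step I expect to be the real obstacle is the linear independence: once the five-term sequence and the presentation are in place the spanning assertion is a routine finite case analysis, but ruling out a hidden $\mathbb{Z}$-relation among the elements of $\mathcal{B}$ requires carefully tracking exactly which monomials of $\Lambda^2 P\Sigma^{ab}_n$ can appear in the abelianized McCool relators — isolating the ``twisted'' terms of $(R3)$ is the crux — or, alternatively, invoking the integral cohomology computation of \cite{jensen mc}.
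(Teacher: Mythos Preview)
Your proposal is correct and follows the same overall strategy as the paper: both arguments use the five-term sequence \eqref{Eq:3.11} to realise $gr^2(P\Sigma_n)$ as a quotient of $\Lambda^2 P\Sigma^{ab}_n$, both run the identical case analysis on the McCool relations \eqref{Eq:2.7} (via Property~\ref{2.1.1}.4) to show that the proposed set $\mathcal{B}$ spans, and both perform the same count $|\mathcal{B}|=\tfrac{n(n-1)^2}{2}=(n-1)r_n(2)$.

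The one genuine difference is how the rank/independence is pinned down. The paper does \emph{not} argue independence directly: it simply quotes $\operatorname{rank} H_2(P\Sigma_n,\mathbb{Z})=\binom{n-1}{2}n^2$ from \eqref{Eq:3.3} (i.e.\ from \cite{jensen mc}), so that \eqref{Eq:3.11} forces $\phi_2(P\Sigma_n)\geq \binom{n(n-1)}{2}-\binom{n-1}{2}n^2=(n-1)r_n(2)$, and since $\mathcal{B}$ is a spanning set of exactly this size, $gr^2(P\Sigma_n)$ is free on $\mathcal{B}$. Your plan instead gives a self-contained combinatorial argument inside $\Lambda^2 P\Sigma^{ab}_n$, using that the ``twisted'' monomials $\overline{\xi_{kj}}\wedge\overline{\xi_{ik}}$ appearing in the $(R3)$-relators are pairwise distinct and disjoint from the $\mathcal{B}$-monomials, so no nonzero element of $K$ is supported on $\mathcal{B}$. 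What this buys you is that your proof does not depend on the integral cohomology computation of \cite{jensen mc}; what the paper's route buys is brevity --- once \eqref{Eq:3.3} is accepted, no monomial bookkeeping is needed at all. Your closing remark that injectivity of $H_2(P\Sigma_n,\mathbb{Z})\to\Lambda^2 P\Sigma^{ab}_n$ is then a \emph{consequence} (rather than an input) is a nice bonus of the direct approach.
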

\begin{proof}
First of all, it follows from \eqref{Eq:3.11} and \eqref{Eq:3.3} that the rank $\phi_2(P\Sigma_n)$ of $gr^2(P\Sigma_{n})$  is at least to
%	$gr^2(P\Sigma_{n})\cong \bigwedge\nolimits^2 P\Sigma^{ab}_{n}/ H_2(P\Sigma_{n})=\bigwedge\nolimits^2 \mathbb{Z}^{n(n-1)}/ \mathbb{Z}^{\binom{n-1}{2} n^2}.$ 
\begin{eqnarray}
\binom{n(n-1)}{2}-\binom{n-1}{2} n^2 =\frac{n(n-1)^2}{2}=(n-1)\cdot r_n(2).
\label{Eq:A.19}
\end{eqnarray}
As indicated in [\cite{barda03}, Lemma 7], each of these two following subgroups of $P\Sigma_n$
\begin{equation}
	\langle \xi_{i,j}, \xi_{j,i}\rangle_{1\leq i < j \leq n} \text{  $ $ and } \langle \xi_{i,j}, \xi_{i,t}\rangle_{1\leq i\leq n, i\neq j,t, 1\leq j<t \leq n } \label{Eq:3.13}
	\end{equation}
is free groups of rank 2. By the Prop.\ref{2.1.11}, $gr^2(P\Sigma_n)$ is generated by $\left[\xi_{i,j}, \xi_{s,t} \right] \mod \Gamma_{3}(P\Sigma_{n})$ for indices $i,j,s,t$ distinct 2 to 2. It should be noted that from \eqref{Eq:3.13} 
	\begin{equation}
	0 \neq \left[\xi_{i,j}, \xi_{j,i}  \right] \in gr^2(P\Sigma_{n}) \text{  $ $ and } 0\neq  \left[\xi_{i,j}, \xi_{i,t}  \right] \in gr^2(P\Sigma_{n}). \label{Eq:3.14}
	\end{equation}
	We claim that $gr^2(P\Sigma_{n})$ is generated by the elements of \eqref{Eq:3.14}. Indeed, by the McCool relations \eqref{Eq:2.7}, we have
	\begin{itemize}
		\item If $\{i,j\} \cap \{s,t\} = \emptyset$, then $\left[\xi_{i,j}, \xi_{s,t}\right]=\displaystyle{1\!\!1}$ and thus $\left[\xi_{i,j}, \xi_{s,t}\right]=0$ in  $ gr^2(P\Sigma_n).$
		\item if $t=j$ then $\left[\xi_{i,j}, \xi_{s,j}\right]=\displaystyle{1\!\!1}$ and thus $\left[\xi_{i,j}, \xi_{s,j} \right]=0$ in $ gr^2(P\Sigma_n).$
		\item if now $i=t$, we then have $\left[\xi_{i,j}, \; \xi_{s,i} \right]= \left\{
		\begin{array}{ll}
		\left[\xi_{i,j}, \; \xi_{j,i} \right] & \mbox{ if } s=j  \\
		\displaystyle{1\!\!1} & \mbox{ otherwise,}
		\end{array}
		\right.$\\  and so we get
		$$\left[\xi_{i,j}, \; \xi_{s,i} \right]= \left\{
		\begin{array}{ll}
		\left[\xi_{i,j}, \; \xi_{j,i} \right] & \mbox{ if } s=j  \\
		0 & \mbox{ otherwise,}
		\end{array} \right. \text{ in } gr^2(P\Sigma_n).$$
		\item If now $s=j$, we then have $\left[\xi_{i,j}, \; \xi_{j,t}\right],$  or by 4 of the Property \ref{2.1.1} we have $$\displaystyle{1\!\!1}=\left[\xi_{i,j}, \; \xi_{i,t}.\xi_{j,t}\right] =\left[ \xi_{i,j},\; \xi_{j,t} \right] \left[ \xi_{i,j},\; \xi_{i,t} \right] \left[\left[ \xi_{i,j},\; \xi_{i,t} \right],\; \xi_{j,t}\right]$$ and thus  $\left[ \xi_{i,j},\; \xi_{j,t} \right]=-\left[ \xi_{i,j},\; \xi_{i,t} \right]$  in  $gr^2(P\Sigma_n).$
	\end{itemize}  
	Hence all generators are reduced to
	$[ \xi_{ij}, \xi_{it} ] \text{ $ $ and }\; [ \xi_{ij}, \xi_{ji} ].$ Hence, the rank $\phi_2(P\Sigma_n)$ of $gr^2(P\Sigma_n)$ is at most
\begin{eqnarray}
\#( \{ [ \xi_{ij}, \xi_{ji} ]| 1 \leq i < j \leq n \} \; &\cup& \{[ \xi_{ij}, \xi_{i,t} ]|  1\leq i\leq n, i\neq j,t, 1\leq j<t \leq n\}) \nonumber\\
&=&\frac{1}{2}n(n-1)^2= (n-1) \cdot r_n(2).
\end{eqnarray}
Thus, keeping in mind \eqref{Eq:A.19}, $\phi_2(P\Sigma_n)= (n-1) \cdot r_n(2).$
\end{proof}
As we have just seen, all the ranks of $gr^k(P\Sigma_n)$ determined here are obtained by the formula
\begin{eqnarray}
\phi_k(P\Sigma_{n})=(n-1)\cdot r_n(k)
\label{Eq:3.15}
\end{eqnarray}
We conclude this section by conjecturing that the formula \eqref{Eq:3.15} holds for all $n\geq 4 $ and $k\geq 3$ whose demonstration is the subject of a work in progress. 
\begin{conjecture}
	Let $k,n$ be integers such that $k \geq 3$ and $n \geq 4.$ The rank $\phi_k(P\Sigma_n)$ of $gr^k(P\Sigma_n)$ is given by $(n-1) \cdot r_n(k).$
\label{conjecturelower}	
\end{conjecture}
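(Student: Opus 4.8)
The plan is to reduce the computation to the quotient group $OP\Sigma_n=P\Sigma_n/V_n$ and then to equip that quotient with an iterated almost-direct product structure of the expected shape. By \eqref{Eq:3.4} the conjugation action of $OP\Sigma_n$ on $V_n^{ab}$ is trivial, so Theorem \ref{2.1.16} applied to the split extension $\{\displaystyle{1\!\!1}\}\to V_n\to P\Sigma_n\to OP\Sigma_n\to\{\displaystyle{1\!\!1}\}$ gives $gr^k(P\Sigma_n)=gr^k(V_n)\oplus gr^k(OP\Sigma_n)=\mathcal{L}_n(k)\oplus gr^k(OP\Sigma_n)$, whence $\phi_k(P\Sigma_n)=r_n(k)+\phi_k(OP\Sigma_n)$ and the conjecture becomes equivalent to
$$\phi_k(OP\Sigma_n)=(n-2)\,r_n(k)\qquad(k\ge 1,\ n\ge 4).$$
Since $OP\Sigma_n$ is generated by exactly $n(n-2)$ elements, the cleanest scenario — and the one forced in the cases $k=1,2$ and $n=2,3$ already recorded — is that $OP\Sigma_n$ is itself an iterated almost-direct product of $(n-2)$ free groups each of rank $n$; equivalently, that $P\Sigma_n$ is an iterated almost-direct product of $(n-1)$ copies of $F_n$. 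So the first thing I would attempt is to construct such a decomposition explicitly, by a suitable change of generating set analyzed through the McCool relations \eqref{Eq:2.7}, in the spirit of the weight-two collapsing in the proof of Theorem \ref{3.1.2}. Given the decomposition, iterated application of Theorem \ref{2.1.16} splits $gr^\ast$ over the factors, and Witt's formula \eqref{rangdeL} — i.e.\ the identity $\prod_{k}(1-t^k)^{r_n(k)}=1-nt$, hence $\prod_{k}(1-t^k)^{(n-1)r_n(k)}=(1-nt)^{n-1}$ — completes the count.

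Should no such almost-direct product exist, the natural fallback is cohomological, generalizing the proof of Theorem \ref{3.1.2}. One applies the five-term homology exact sequence to $\{\displaystyle{1\!\!1}\}\to\Gamma_k(P\Sigma_n)\to P\Sigma_n\to N_{k-1}(P\Sigma_n)\to\{\displaystyle{1\!\!1}\}$; since $\Gamma_{k+1}(P\Sigma_n)=\left(\Gamma_k(P\Sigma_n),P\Sigma_n\right)$ and $N_{k-1}(P\Sigma_n)$ has the same abelianization as $P\Sigma_n$, this identifies $gr^k(P\Sigma_n)$ with a quotient of $H_2(N_{k-1}(P\Sigma_n),\mathbb{Z})$, and together with the Jensen--McCammond--Meier computation of $H^\ast(P\Sigma_n,\mathbb{Z})$ \cite{jensen mc} and the rank formula \eqref{Eq:3.3} it should bound $\phi_k(P\Sigma_n)$ from above by $(n-1)r_n(k)$. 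For the matching lower bound one exhibits $(n-1)\,r_n(k)$ explicit iterated commutators in the Magnus generators $\xi_{i,j}$ whose images under the homomorphism $gr^k(P\Sigma_n)\to V^\ast\otimes_{\mathbb{Z}}\mathcal{L}_n(k+1)$ — built, as $\tau^{(I)}_k$ was for $I_n$, by composing the canonical map $gr^k(P\Sigma_n)\to Gr^k(\mathcal{A}_n\cap P\Sigma_n)$ with the Johnson homomorphism $\tau_k$ — are linearly independent, generalizing the weight-two basis of Theorem \ref{3.1.2}; this forces $\phi_k(P\Sigma_n)\ge(n-1)r_n(k)$, and the two bounds give equality. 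A third, more conceptual route would be to prove that $P\Sigma_n$ is $1$-formal with Koszul rational cohomology ring, since then the classical lower-central-series formula $\prod_{k}(1-t^k)^{\phi_k(P\Sigma_n)}=(1-nt)^{n-1}$ holds, $(1+nt)^{n-1}$ being the Poincaré polynomial of $P\Sigma_n$ prescribed by \eqref{Eq:3.3}.

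The main obstacle is the structural input. For $n\ge 4$ the group $OP\Sigma_n$ is genuinely complicated, it is not manifestly an almost-direct product of free groups, and it is not even clear a priori that $gr^k(OP\Sigma_n)$ is torsion-free. In the homological route the crux is controlling $H_2(N_{k-1}(P\Sigma_n),\mathbb{Z})$, the Schur multiplier of the $(k-1)$-step nilpotent quotient, for $k\ge 3$ — precisely the ingredient that is trivially $\bigwedge\nolimits^2 P\Sigma_n^{ab}$ when $k=2$. In the formality route the crux is that $1$-formality and Koszulness of McCool groups are delicate and, for $n\ge 4$, far from clear. In every case one must in effect rule out any new relations appearing in $gr^k(P\Sigma_n)$ in weights $k\ge 3$ beyond those already forced in weight $2$, which is exactly what keeps the statement a conjecture here.
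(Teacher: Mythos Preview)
The statement you are addressing is stated in the paper as a \emph{conjecture}, not a theorem: the paper gives no proof, only the evidence collected in the cases $n\le 3$ and $k\le 2$ and the observation that the formula $\phi_k(P\Sigma_n)=(n-1)\,r_n(k)$ holds there. So there is no ``paper's own proof'' to compare your proposal against.

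Your write-up is accordingly not a proof either, and you say so yourself in the last paragraph. What you have produced is a coherent survey of three plausible attack lines (an almost-direct product decomposition of $OP\Sigma_n$ into $n-2$ free factors of rank $n$; a homological upper bound via the five-term sequence for $\Gamma_k\trianglelefteq P\Sigma_n$ paired with a Johnson-type lower bound; and $1$-formality plus Koszulness of $H^\ast(P\Sigma_n)$), together with an honest accounting of the obstacle in each. The reduction $\phi_k(P\Sigma_n)=r_n(k)+\phi_k(OP\Sigma_n)$ and the Poincar\'e polynomial bookkeeping are correct, and your identification of the missing structural input---that for $n\ge 4$ one does not know $OP\Sigma_n$ to be an iterated almost-direct product of free groups, nor $H^\ast(P\Sigma_n)$ to be Koszul, nor $H_2(N_{k-1}(P\Sigma_n))$ to be tractable---is exactly the reason the paper leaves the statement as a conjecture. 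In short: your proposal matches the paper in that neither contains a proof; what you add is a reasonable roadmap, not a resolution.
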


\subsection{Factor groups of the lower central series for upper triangular McCool group}
\label{sec:Factor groups of the lower central series for upper triangular McCool group}
Let $P\Sigma^+_n$ be the subgroup of $P{\Sigma}_n$ generated by $\xi_{i,j}$ with $1\leq j< i\leq n$. In \cite{cpvw}, it is shown that $P\Sigma^+_n$ can be realized as an iterated almost-direct product of free groups. If we now set $x_{p,i}:=\xi_{n-i+1,n-p}$ for all $1\leq i \leq p \leq n-1,$ then
\begin{equation}
P\Sigma^+_n=\mathbb{F}_{n-1} \rtimes_{\mu_{n-1}}(\mathbb{F}_{n-2} \rtimes_{\mu_{n-2}}( \dots ( \mathbb{F}_2 \rtimes_{\mu_{2}} \mathbb{F}_1)\dots )):=\rtimes^{n-1}_{p=1} \mathbb{F}_p
\label{decomposiupper}
\end{equation}
where $\mathbb{F}_p$ ($p=1,\dots,n-1$) is a free subgroup of $P\Sigma_n$ of rank $p$ on $\{x_{p,1},\dots,x_{p,p}\}$ and the homomorphism $\mu_{p}:\rtimes^{p-1}_{j=1}\mathbb{F}_j\rightarrow IA(\mathbb{F}_p)$ which determine the structure of this iterated semidirect product is given by :
\begin{equation}
\mu_{p} (x_{q,j})(x_{p,i})=x^{-1}_{q,j}\; x_{p,i}\; x_{q,j}=\left\{ \begin{array}{cl} x_{p,j}\; x_{p,i} \; x^{-1}_{p,j}   & \textrm{if } q =i \\ x_{p,i} & \text{ otherwise }   \end{array}\right.
\end{equation} where $1\leq i \leq p$, $1\leq j \leq q $ and $1 \leq q <p \leq n-1.$ As a consequence Prop.\ref{2.1.18}, we have the following finite presentation of $P\Sigma^+_{n}$ \cite{cohen}:
\begin{prop}
The upper triangular McCool group $P\Sigma^+_{n}=\rtimes^{n-1}_{p=1} \mathbb{F}_p$ where $\mathbb{F}_p=\langle x_{p,i}| 1\leq i \leq p \rangle$ has a finite presentation with generators $x_{p,i} (1\leq i \leq p \leq n-1)$ and with the following relations
	\begin{eqnarray}
	\left[x_{p,i}, x_{q,j}\right]&=&\left[x_{p,i}, x^{-1}_{p,j}  \right]\textrm{ if } q =i \nonumber\\
	\left[x_{p,i}, x_{q,j}\right]&=&\displaystyle{1\!\!1} \text{ otherwise } \label{Eq:3.18}
	\end{eqnarray}
	where $1\leq i \leq p$, $1\leq j \leq q $ and $1 \leq q <p \leq n.$
	\label{3.5}
\end{prop}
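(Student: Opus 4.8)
The plan is to read off the presentation \eqref{Eq:3.18} as a direct specialization of the general presentation of almost-direct products of free groups furnished by Prop.~\ref{2.1.18}, applied to the decomposition \eqref{decomposiupper}. Since $P\Sigma^+_n = \rtimes^{n-1}_{p=1} \mathbb{F}_p$ is an iterated almost-direct product of the free groups $\mathbb{F}_p = \langle x_{p,1}, \dots, x_{p,p}\rangle$, Prop.~\ref{2.1.18} guarantees that $P\Sigma^+_n$ is presented by the finitely many generators $x_{p,i}$ ($1 \le i \le p \le n-1$) subject only to the conjugation relations arising from the semidirect-product structure, namely $x_{q,j}^{-1} x_{p,i} x_{q,j} = \mu_p(x_{q,j})(x_{p,i})$ for $1 \le q < p \le n-1$, $1 \le j \le q$ and $1 \le i \le p$. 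The whole task then reduces to rewriting these conjugation relations in commutator form.

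First I would record the explicit action. By the formula defining $\mu_p$, for $1 \le q < p$ one has
\begin{eqnarray}
x_{q,j}^{-1} x_{p,i} x_{q,j} = \left\{ \begin{array}{cl} x_{p,j}\, x_{p,i}\, x_{p,j}^{-1} & \textrm{if } q = i,\\ x_{p,i} & \text{otherwise.} \end{array}\right. \nonumber
\end{eqnarray}
Using the definition $(a,b) = a^{-1}b^{-1}ab$ together with the identity $a^{b} = a(a,b)$ from Property~\ref{2.1.1}, I would then compute $(x_{p,i}, x_{q,j}) = x_{p,i}^{-1}(x_{q,j}^{-1} x_{p,i} x_{q,j})$ and substitute the two cases above.

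In the generic case $q \neq i$ the action is trivial, so $x_{q,j}^{-1} x_{p,i} x_{q,j} = x_{p,i}$ and hence $(x_{p,i}, x_{q,j}) = \displaystyle{1\!\!1}$, which is the second line of \eqref{Eq:3.18}. In the case $q = i$ the substitution gives $(x_{p,i}, x_{q,j}) = x_{p,i}^{-1} x_{p,j} x_{p,i} x_{p,j}^{-1}$; on the other hand, expanding $(x_{p,i}, x_{p,j}^{-1})$ directly from the commutator definition in Property~\ref{2.1.1} yields the same word $x_{p,i}^{-1} x_{p,j} x_{p,i} x_{p,j}^{-1}$. Comparing the two produces $(x_{p,i}, x_{q,j}) = (x_{p,i}, x_{p,j}^{-1})$, the first line of \eqref{Eq:3.18}. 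Since each commutator relation is equivalent to the corresponding conjugation relation, and the latter present $P\Sigma^+_n$ by Prop.~\ref{2.1.18}, matching the index ranges $1 \le j \le q < p \le n-1$, $1 \le i \le p$ against those in the statement completes the argument.

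The computation itself is routine; the one point demanding care---and the most likely source of a slip---is the index bookkeeping. The action $\mu_p$ is written with the conjugating generator $x_{q,j}$ carrying the smaller index $q$, which is the opposite of the convention $p<q$ employed in Prop.~\ref{2.1.18}. When invoking \ref{2.1.18} I would therefore be careful to identify $\mathbb{F}_p$ as the normal factor and $\mathbb{F}_q$ (with $q<p$) as the acting factor, so that the word produced by that proposition is correctly read off as $x_{p,i}^{-1}\mu_p(x_{q,j})(x_{p,i})$, a word in the generators of the top factor $\mathbb{F}_p$, and so that no spurious relations between generators of the same factor are introduced.
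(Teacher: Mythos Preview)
Your proposal is correct and follows exactly the route the paper takes: the paper states Prop.~\ref{3.5} simply ``as a consequence Prop.~\ref{2.1.18}'' without further detail, and you have supplied precisely the specialization of that general presentation to the decomposition \eqref{decomposiupper}, together with the routine commutator computation verifying the two cases. Your careful remark on the index conventions (the acting factor carrying the smaller index here) is a helpful addition not spelled out in the paper.
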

From the almost-direct product decomposition $P\Sigma_n^+=\rtimes^{n-1}_{p=1} \mathbb{F}_p$, we can describe the splitting $P\Sigma_n^+=\mathbb{F}_{n-1} \rtimes P\Sigma_{n-1}^+.$ Note that $P\Sigma_{n-1}^+$ acts by conjugation on $\mathbb{F}^{ab}_{n-1}$ trivially. It results from [\cite{ihara}, Lemma 3.11] that the almost-direct product $P\Sigma_n^+=\mathbb{F}_{n-1} \rtimes P\Sigma_{n-1}^+$ induces a decomposition of the associated Lie algebras over $\mathbb{Z}$
\begin{eqnarray}
gr^{\ast}(P\Sigma^+_{n})=gr^{\ast}(\mathbb{F}_{n-1}) \bigoplus gr^{\ast}(P\Sigma^+_{n-1}).
\end{eqnarray}
Let $\mathfrak{e}_{p,i}$ $(1\leq i \leq p \leq n-1)$ denote the image of each $x_{p,i}$ in $gr^{1}(P\Sigma^+_{n}).$ For each $1 \leq p \leq n-1,$ let $\mathfrak{E}_p=\{\mathfrak{e}_{p,1}, \dots, \mathfrak{e}_{p,p}\}$ and let $L[\mathfrak{E}_p]$ be the corresponding free Lie algebra of rank $p$ over $\mathbb{Z}.$ We know that $gr^{\ast}(\mathbb{F}_{n-1})$ is the free Lie algebra of rank $n-1$ over $\mathbb{Z}$ generated by the classes $\mathfrak{e}_{(n-1),1},\dots, \mathfrak{e}_{(n-1),(n-1)} \in gr^{1}(\mathbb{F}_{n-1})$ of $x_{(n-1),1},\dots, x_{(n-1),(n-1)}.$ Hence 
\begin{eqnarray}
gr^{\ast}(P\Sigma^+_{n})=L[\mathfrak{E}_{n-1}] \bigoplus gr^{\ast}(P\Sigma^+_{n-1}).
\label{58}
\end{eqnarray}
\begin{prop}
The Lie algebra $gr^{\ast}(P\Sigma^+_{n})$ is the quotient of the free Lie algebra $L[\mathfrak{E}_p]$ over $\mathbb{Z}$ generated by elements $\mathfrak{e}_{p,i}$ $(1\leq i \leq p \leq n-1)$ modulo the following relations
	\begin{eqnarray}
	\left[\mathfrak{e}_{p,i}, \mathfrak{e}_{p,j}+ \mathfrak{e}_{q,j}\right]&=&0 \textrm{ if } q =i \nonumber\\
	\left[\mathfrak{e}_{p,i}, \mathfrak{e}_{q,j}\right]&=&0 \text{ otherwise } \nonumber
	\end{eqnarray}
where $1\leq i \leq p$, $1\leq j \leq q $ and $1 \leq q <p \leq n.$	
\label{Lie:3.6}
\end{prop}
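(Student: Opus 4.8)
The plan is to run, \emph{mutatis mutandis}, the same argument as in the proof of Proposition \ref{Lie:3.9}, using the decomposition $gr^{\ast}(P\Sigma^+_n)=L[\mathfrak{E}_{n-1}]\oplus gr^{\ast}(P\Sigma^+_{n-1})$ of \eqref{58} in place of \eqref{decomposition26}. First I would note that, by Prop.\ref{3.5}, the classes $\mathfrak{e}_{p,i}$ generate $gr^{\ast}(P\Sigma^+_n)$, and that reducing the relations \eqref{Eq:3.18} modulo the relevant terms of the lower central series turns them into the two displayed Lie relations; hence there is a surjection $q_n\colon \mathfrak{s}_n\twoheadrightarrow gr^{\ast}(P\Sigma^+_n)$, where $\mathfrak{s}_n$ denotes the Lie algebra over $\mathbb{Z}$ presented by the symbols $\mathfrak{e}_{p,i}$ ($1\le i\le p\le n-1$) modulo those relations. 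I would also introduce the epimorphism $\pi_n\colon \mathfrak{s}_n\to\mathfrak{s}_{n-1}$ killing the top generators $\mathfrak{e}_{(n-1),1},\dots,\mathfrak{e}_{(n-1),(n-1)}$; it is well defined because the relations of $\mathfrak{s}_n$ not involving these generators are exactly the defining relations of $\mathfrak{s}_{n-1}$.

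The proof is then by induction on $n\ge 2$. The base case $n=2$ is immediate, since $P\Sigma^+_2=\mathbb{F}_1$ is free of rank $1$ and no relations occur. For $n\ge 3$ I would set $\mathfrak{K}_n:=\ker(\pi_n)$ and consider the commutative ladder with exact rows whose top row is $\displaystyle{0\!\!0}\to\mathfrak{K}_n\to\mathfrak{s}_n\xrightarrow{\pi_n}\mathfrak{s}_{n-1}\to\displaystyle{0\!\!0}$, whose bottom row is the split exact sequence $\displaystyle{0\!\!0}\to L[\mathfrak{E}_{n-1}]\to gr^{\ast}(P\Sigma^+_n)\to gr^{\ast}(P\Sigma^+_{n-1})\to\displaystyle{0\!\!0}$ provided by \eqref{58}, with vertical maps $k_n\colon\mathfrak{K}_n\to L[\mathfrak{E}_{n-1}]$, $q_n$ (surjective), and $q_{n-1}$ (an isomorphism by the induction hypothesis); commutativity is clear by construction. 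Once $k_n$ is known to be an isomorphism, the five lemma forces $q_n$ to be one too, and the induction is complete.

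The only real work, and the step I expect to be the main obstacle, is showing that $k_n$ is an isomorphism. A priori $\mathfrak{K}_n$ is merely the \emph{ideal} of $\mathfrak{s}_n$ generated by $\mathfrak{e}_{(n-1),1},\dots,\mathfrak{e}_{(n-1),(n-1)}$; one must upgrade this to the statement that $\mathfrak{K}_n$ is the free Lie \emph{subalgebra} on these $n-1$ elements. To do so I would exploit the upper-triangular shape of the relations \eqref{Eq:3.18} with $p=n-1$: a bracket $[\mathfrak{e}_{(n-1),i},\mathfrak{e}_{q,j}]$ with $q<n-1$ vanishes unless $q=i$, and in that case it equals $-[\mathfrak{e}_{(n-1),i},\mathfrak{e}_{(n-1),j}]$. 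Thus every bracket of a top generator with a lower one is either $0$ or already lies in the Lie subalgebra generated by the top generators; iterating, $\mathfrak{K}_n$ is generated, as a Lie subalgebra, by the $n-1$ elements $\mathfrak{e}_{(n-1),i}$. Since $\mathfrak{K}_n$ then surjects via $k_n$ onto the free Lie algebra $L[\mathfrak{E}_{n-1}]$ of rank $n-1$ on these same generators, and a Lie algebra generated by $m$ elements which surjects onto a free Lie algebra of rank $m$ matching the generators is itself free of rank $m$ with the surjection an isomorphism, $k_n$ must be an isomorphism. This is precisely the $P\Sigma^+_n$-analogue of the step in Proposition \ref{Lie:3.9} where one uses that the kernel is generated as a Lie subalgebra by the top-level generators, and applying the five lemma then closes the induction.
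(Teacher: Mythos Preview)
Your proposal is correct and follows essentially the same approach as the paper, which simply says that the proof of Proposition~\ref{Lie:3.9} adapts verbatim with $P\Sigma^+_n$ in place of $I_n$. In fact you supply more detail than the paper does at the key step, explaining via the triangular form of the relations why $\mathfrak{K}_n$ is generated as a Lie subalgebra (and not merely as an ideal) by the top generators $\mathfrak{e}_{(n-1),i}$.
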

\begin{proof}
The proof adapts verbatim to the proof of Prop.\ref{Lie:3.9} by considering $P\Sigma^+_n$ instead of $I_n.$
\end{proof}
\section{Group cohomology of partial inner automorphism group $I_n$}
\label{chap: Anneau de cohomologie du groupe}

Here, we determine the structure of the cohomology ring $H^{\ast}(I_n,\mathbb{Z})$ of $I_n.$ Recall first that an almost-direct product $G=\rtimes^k_{p=1} G_p$ of the free groups $G_p= \langle x_{p,1}, \dots, x_{p,n_p} \rangle $ is an iterated semidirect product of free groups in which the action of the constituent free groups on the abelianization of one another is trivial. In \cite{cohen}, Cohen determined the structure of the cohomology ring $H^{\ast}(G,\mathbb{Z})$ of such a group as being a quotient of the outer algebra $E=\bigwedge H^{1}(G,\mathbb{Z}).$ Let $\{ e_{p,i} | \; 1\leq i\leq n_p , 1\leq p \leq k \}$ denote the dual basis of the basis $\{\overline{x_{p,i}} |1\leq i\leq n_p , 1\leq p \leq k \}$ of $H_1(G,\mathbb{Z})=G^{ab}.$ We state its result as follows: Let $E=\bigwedge H^{1}(G,\mathbb{Z})$ be the outer algebra generated by the elements $e_{p,i}$ $(1\leq i \leq n_p, \; 1\leq p \leq k)$ and let $\eta^{i,j}_p$ be the elements of the form $\eta^{i,j}_p=  e_{p,i}\wedge e_{p,j} +\sum_{q=1}^{p-1} \sum_{r=1}^{n_q} \sum_{s=1}^{n_p} \kappa^{i,j,r,s}_{q,p} e_{q,r}\wedge e_{p,s}$ where the coefficients $\kappa^{i,j,r,s}_{q,p}$ are the entries of a matrix (see for details \cite{cohen} .) The set $\mathfrak{J}=\{ \eta^{i,j}_p | 1\leq p \leq k, \; 1\leq i \leq j \leq n_p \} $ is a basis of $ker(H^2(\mathbb{Z}^m) \rightarrow H^2(G)),$ the kernel of the dual of homomorphism $H_2(\mathbb{Z}^m) \rightarrow H_2(G)$ with $m=\displaystyle\sum^{k}_{p=1}n_p.$ Let $J$ be the bilateral ideal generated by the elements of $\mathfrak{J}.$
The result of Cohen [Theorem 3.1 in \cite{cohen}] shows that $H^{\ast}(G,\mathbb{Z})$ is isomorphic to the quotient $E$ by $J.$ 
\begin{example}
	For the group $G=P\Sigma^+_n$ is generated by elements $x_{p,i}=\xi_{n-i+1,n-p}$ for $1\leq i \leq p \leq n-1$ subject the following relations 
	\begin{eqnarray}
	\left[x_{p,i}, x_{q,j}\right]&=&\left[x_{p,i}, x^{-1}_{p,j}  \right] \textrm{ if } q =i \nonumber\\
	\left[x_{p,i}, x_{q,j}\right]&=& \displaystyle{1\!\!1} \text{ otherwise } \nonumber
	\end{eqnarray}
	where $1\leq i \leq p$, $1\leq j \leq q $ and $1 \leq q <p \leq n,$ (see Prop.\ref{3.5} in Appendix A2). As a consequence [Theorem 3.1 in \cite{cohen}], we have $H^{\ast}(P\Sigma^{+}_{n},\mathbb{Z}) = E_P/J_P$ where $E_P= \bigwedge H^{1}(P\Sigma^{+}_{n},\mathbb{Z})$ is generated by $e_{p,i}, \; 1 \leq i < p \leq n-1$ with $e_{p,i}$ represents the dual of generators $\overline{x_{p,i}}$ of $H_1(P\Sigma^{+}_{n}, \mathbb{Z})$ and $J_P$ is the ideal generated by $$e_{p,i}\wedge e_{p,j}-e_{p,i}\wedge e_{i,j}, \text{ $ $ where }   j<i < p .$$ 
\end{example}$ $\\
This description of $H^{\ast}(P\Sigma^{+}_{n},\mathbb{Z})$ is exactly that given in \cite{cpvw}, but the proof is different. A similar result holds for the group $I_n.$ Let $\{a_{p,i} |\; 2\leq p \leq n, 1\leq i \leq p \}$ denote the dual basis of the basis $\{\overline{\nu}_{p,i} |\;2\leq p \leq n, 1\leq i \leq p \}$ of $H_1(I_n,\mathbb{Z}).$
\begin{prop}
The cohomology ring $H^{\ast}(I_n,\mathbb{Z})$ of $I_n$ is isomorphic to $E_I/J_I,$ where $E_I=\bigwedge H^1(I_n,\mathbb{Z})$ is the exterior algebra generated by degree-one element $a_{p,i}$ for $2\leq p \leq n, 1\leq i \leq p $ and $J_I$ is the ideal generated by the elements $$a_{p,i}\wedge a_{p,j}+a_{q,j}\wedge a_{p,i} \text{$ $  where }  j < i \leq q<p.$$
	\label{3.3.3}
\end{prop}
\begin{proof}
	Since $I_n$  is an almost-direct product of free groups, the cohomology $H^{\ast}(I_n,\mathbb{Z})$ of  $I_n$ follows directly from the results of [Theorem 3.1 in \cite{cohen}].
\end{proof}

\bibliographystyle{abbrv}	% style utiise pour genere la page des references

%\bibliography{Bibliography}

\end{document}